\documentclass[oneside,reqno,english]{amsart}
\usepackage[T1]{fontenc}
\usepackage[utf8]{inputenc}
\usepackage{xcolor}
\usepackage{babel}
\usepackage{prettyref}
\usepackage{amstext}
\usepackage{amsthm}
\usepackage{amssymb}
\usepackage[pdfusetitle,
 bookmarks=true,bookmarksnumbered=false,bookmarksopen=false,
 breaklinks=false,pdfborder={0 0 0},pdfborderstyle={},backref=false,colorlinks=false]
 {hyperref}
\hypersetup{
 colorlinks=true,citecolor=blue,linkcolor=blue,linktocpage=true}

\makeatletter
\numberwithin{equation}{section}
\numberwithin{figure}{section}

\usepackage{prettyref}

\newrefformat{cor}{Corollary~\ref{#1}}
\newrefformat{subsec}{Section~\ref{#1}}
\newrefformat{lem}{Lemma~\ref{#1}}
\newrefformat{thm}{Theorem~\ref{#1}}
\newrefformat{sec}{Section~\ref{#1}}
\newrefformat{chap}{Chapter~\ref{#1}}
\newrefformat{prop}{Proposition~\ref{#1}}
\newrefformat{exa}{Example~\ref{#1}}
\newrefformat{tab}{Table~\ref{#1}}
\newrefformat{rem}{Remark~\ref{#1}}
\newrefformat{def}{Definition~\ref{#1}}
\newrefformat{fig}{Figure~\ref{#1}}
\newrefformat{claim}{Claim~\ref{#1}}
\newrefformat{assu}{Assumption~\ref{#1}}

\makeatother

\theoremstyle{plain}
\newtheorem{thm}{\protect\theoremname}[section]
\newtheorem{prop}[thm]{\protect\propositionname}
\theoremstyle{remark}
\newtheorem{rem}[thm]{\protect\remarkname}
\theoremstyle{plain}
\newtheorem{cor}[thm]{\protect\corollaryname}
\theoremstyle{definition}
\newtheorem{example}[thm]{\protect\examplename}
\theoremstyle{plain}
\newtheorem{lem}[thm]{\protect\lemmaname}
\providecommand{\corollaryname}{Corollary}
\providecommand{\examplename}{Example}
\providecommand{\lemmaname}{Lemma}
\providecommand{\propositionname}{Proposition}
\providecommand{\remarkname}{Remark}
\providecommand{\theoremname}{Theorem}

\begin{document}
\subjclass[2020]{Primary: 46E22; secondary: 31C20, 37C30, 47A20, 60G15}
\title[Subinvariant kernel dynamics]{Subinvariant kernel dynamics}
\begin{abstract}
We study positive definite kernels pulled back along a finite family
of self-maps under a subinvariance inequality for the associated branching
operator. Iteration produces an increasing kernel tower with defect
kernels. Under diagonal boundedness, the tower has a smallest invariant
majorant, with a canonical defect space realization and an explicit
diagonal harmonic envelope governing finiteness versus blow-up. We
also give probabilistic and boundary representations: a Gaussian martingale
model whose quadratic variation is the defect sequence, and canonical
Doob path measures with a boundary feature model for the normalized
defects.
\end{abstract}

\author{James Tian}
\address{Mathematical Reviews, 535 W. William St, Suite 210, Ann Arbor, MI
48103, USA}
\email{james.ftian@gmail.com}
\keywords{positive definite kernels; reproducing kernel Hilbert spaces; invariant
majorants; defect decompositions; branching dynamics; Doob transform;
boundary path measures; Gaussian martingales}

\maketitle
\tableofcontents{}

\section{Introduction}\label{sec:1}

We consider a simple but robust situation in which a positive definite
kernel $K:X\times X\rightarrow\mathbb{C}$ on a set $X$ is repeatedly
pulled back along a finite family of maps $\phi_{1},\dots,\phi_{m}:X\to X$.
The basic object is the associated ``branching'' operator $L$ acting
on kernels, 
\[
\left(LJ\right)\left(s,t\right):=\sum^{m}_{i=1}J\left(\phi_{i}\left(s\right),\phi_{i}\left(t\right)\right),
\]
and we assume throughout the subinvariance inequality $LK\geq K$
in the positive-definite order. Informally, this condition says that
the kernel does not lose positive mass when transported along the
$\phi$-tree. It is reminiscent of subharmonicity and of inequalities
of the form $\Phi\left(A\right)\geq A$ for completely positive maps
on operator algebras \cite{MR1976867}, and it appears implicitly
in a variety of settings: iterated function systems and self-similar
structures, kernel-based models for non-reversible Markov dynamics,
and contractive interpolation problems where a given kernel sits under
a more symmetric or invariant model kernel, cf. \cite{MR3616205,MR1882259,MR1976867,MR3526117}.
From a different but complementary viewpoint, operators obtained by
transporting functions or observables along dynamics (Koopman and
Perron-Frobenius type operators) have recently been analyzed within
reproducing kernel Hilbert spaces (RKHS), where the kernel plays the
role of a state space metric and transfer operators admit spectral
and approximation theory in $\mathcal{H}_{K}$ \cite{MR4054854};
related constructions produce entire families of RKHS adapted to ensemble
dynamics \cite{MR4682643}.

Starting from $K$ and $\left(\phi_{i}\right)$ (\prettyref{sec:2}),
the dynamics of $L$ generates a tower of kernels 
\[
K_{0}:=K,\qquad K_{n+1}:=LK_{n}\quad\left(n\geq0\right),
\]
together with their defects $D_{n}:=K_{n+1}-K_{n}$. One may view
$K_{n}$ as the $n$-step kernel obtained by following all length-$n$
branches in the rooted $\phi$-tree, while $D_{n}$ records the new
positive mass injected between levels $n$ and $n+1$. The first goal
of the paper is to understand the ``invariant completion'' of this
process: under what conditions does the increasing tower $\left(K_{n}\right)$
converge to a limiting kernel $K_{\infty}$, what structure does $K_{\infty}$
inherit from $L$, and how does the original kernel $K$ sit inside
this completion? Throughout, positive definiteness is understood in
the classical sense of Aronszajn \cite{MR51437} (see also \cite{MR3526117,MR3560890,MR2239907}).
A natural finiteness requirement is the diagonal boundedness condition
\[
\sup_{n\geq0}K_{n}\left(s,s\right)<\infty\quad\text{for all }s\in X.
\]
This hypothesis is deliberately weak: we do not assume any topology
or measure on $X$, nor any compactness or spectral input for the
maps $\phi_{i}$. Our first main result shows that, under this assumption,
the tower admits a canonical $L$-invariant completion. The kernels
$K_{n}$ increase pointwise and converge to a positive definite kernel
$K_{\infty}$ satisfying 
\[
K_{\infty}\geq K,\qquad LK_{\infty}=K_{\infty},
\]
and $K_{\infty}$ is minimal among all $L$-invariant kernels majorizing
$K$. Moreover, the defect tower admits an orthogonal realization:
one can represent $K_{\infty}$ on an ambient Hilbert space obtained
by adjoining defect spaces level by level, so that $K$ is recovered
from $K_{\infty}$ by a canonical compression. In particular, the
subinvariance dynamics induces a concrete Radon-Nikodym type representation
of $K$ inside its own invariant completion, in a spirit close to
Sz.-Nagy-Foiaş defect decompositions and model theory for contractions
\cite{MR2760647}, but carried out at the level of kernels and without
assuming an operator on a pre-existing Hilbert space. The $L$-invariant
completion also yields a natural multivariable row-isometry associated
to the maps $\phi_{i}$, placing the construction within the scope
of multivariable dilation theory \cite{MR1668582}. In particular,
the operator-theoretic structure of row isometries and their Wold-type
decompositions \cite{MR2416726,MR4651635} provides a useful organizing
analogy for the orthogonal defect splitting produced here.

The second theme of the paper is to understand when this completion
is genuinely finite and where it breaks down (\prettyref{sec:3}).
The key observation is that the diagonal dynamics decouples as a purely
combinatorial process on the $\phi$-tree. Writing 
\[
u_{n}\left(s\right):=K_{n}\left(s,s\right),\qquad s\in X,
\]
one obtains a branching operator $P$ on nonnegative functions, 
\[
\left(Pu\right)\left(s\right):=\sum^{m}_{i=1}u\left(\phi_{i}\left(s\right)\right),
\]
so that $u_{n+1}=Pu_{n}$ and hence $u_{n}=P^{n}u_{0}$. We show that
the pointwise growth of $\left(u_{n}\right)$ is governed by a potential-theoretic
picture: the function 
\[
h_{\infty}\left(s\right):=\sup_{n\geq0}u_{n}\left(s\right)
\]
is the minimal $P$-harmonic majorant of $u_{0}\left(s\right)=K\left(s,s\right)$.
This leads to a forward-invariant finiteness region 
\[
X_{\mathrm{fin}}:=\left\{ s\in X:\ h_{\infty}\left(s\right)<\infty\right\} ,
\]
on which the invariant completion $K_{\infty}$ is well-defined and
finite on the diagonal, while outside this region the diagonal diverges
monotonically along the tower. Beyond this characterization, we give
usable tree-based criteria---Lyapunov-type decay conditions and complementary
non-decay mechanisms based on branch counting---which are similar
to classical potential theory on trees and branching Markov chains,
but are formulated here for the diagonal entries of a kernel tower
rather than for scalar harmonic functions \cite{MR3616205,MR2548569,MR1324344}.
Recent work on boundary representations for harmonic and polyharmonic
functions on trees \cite{MR4031266,MR4412972} and on Martin boundaries
for branching-type dynamics \cite{MR4798615} provides additional
context for the role of harmonic majorants and finiteness regions
in tree-indexed growth problems.

A third component of the work is a probabilistic reinterpretation
of the defect tower and of the compression (\prettyref{sec:4}). We
build a Gaussian field whose level truncations have covariance kernels
$K_{N}$ and form a martingale with orthogonal increments; the predictable
quadratic variation of this martingale is  the defect tower $\left(D_{n}\right)$.
On finite sets this martingale converges precisely when the Gram matrices
of $K_{N}$ remain uniformly bounded, and the limiting covariance
is $K_{\infty}$. At the level of the ambient Hilbert space, the compression
operator becomes a literal Radon-Nikodym derivative between two Gaussian
covariance structures, one with kernel $K_{\infty}$ and one with
kernel $K$. This point of view is in line with the well-known correspondence
between Gaussian processes and RKHS \cite{MR2514435,MR747302,MR3024389,MR2239907},
but here the Gaussian structure is tailored to the specific defect
decomposition induced by the subinvariant dynamics. Boundary processes
and jump dynamics on random trees \cite{MR4046511} offer a complementary
probabilistic perspective on how tree boundaries encode limiting behavior
of tree-indexed evolutions.

The last part (\prettyref{sec:5}) is a boundary/pathspace organization
of the completion. We make the implicit tree structure of the word
dynamics explicit by passing to the symbolic boundary $\Omega=\left\{ 1,\dots,m\right\} ^{\mathbb{N}}$
and by extracting, from the minimal harmonic majorant $h_{\infty}$,
a canonical family of Doob-transformed Markov measures $\mu_{s}$
on $\Omega$. This produces a normalized “Doob” version of the branching
dynamics and yields boundary cylinder expansions for the normalized
kernel iterates and for the defect tower, which can then be realized
as an $L^{2}$-boundary Gram kernel via an explicit boundary feature
map. This places the present kernel subinvariance problem in the orbit
of pathspace measure and boundary representation constructions for
discrete branching/graph systems \cite{MR4388381,MR4730766,MR4412972},
while keeping the output at the level of kernels/RKHS (rather than
only harmonic functions or operator-algebraic data). 

The overall idea is that the inequality $LK\geq K$, when iterated
along a finite branching system of maps, forces a rich and fairly
rigid structure on the invariant completion and on the diagonal growth
dynamics. The resulting picture has points of contact with dilation
and model theory in operator theory \cite{MR2760647,MR1668582,MR2416726,MR4651635},
with potential theory on trees and branching processes \cite{MR3616205,MR2548569,MR1324344,MR4031266,MR4412972,MR4798615},
and with Gaussian/RKHS methods in probability and statistics \cite{MR2239907,MR2514435,MR3024389}.
It also aligns with the broader scope of kernel-based operator methods
for dynamics, where transfer operators are studied via RKHS structure
\cite{MR4054854,MR4682643}. At the same time, the setting here is
combinatorial and kernel-theoretic: no measure, topology, or ambient
operator algebra is required.

\section{Subinvariance and invariant majorants}\label{sec:2}

In this section, we develop the basic subinvariance theory for the
branching operator $L$ acting on positive definite kernels. 

Let $X$ be a set, $K:X\times X\to\mathbb{C}$ a positive definite
(p.d.) kernel, and $\phi_{1},\dots,\phi_{m}:X\to X$ maps on $X$.
We use the standard convention $K_{s}\left(\cdot\right):=K\left(\cdot,s\right)$,
so that $K\left(s,t\right)=\left\langle K_{s},K_{t}\right\rangle _{\mathcal{H}_{K}}$
and $f\left(s\right)=\left\langle K_{s},f\right\rangle _{\mathcal{H}_{K}}$,
for all $s\in X$ and $f\in\mathcal{H}_{K}$, where $\mathcal{H}_{K}$
is the reproducing kernel Hilbert space (RKHS) of $K$. Throughout,
all inner products are linear in the second variable. 

Let $\mathcal{K}\left(X\right)$ denote the set of all p.d. kernels
on $X$, and write $J_{1}\ge J_{2}$ if $J_{1}-J_{2}\in\mathcal{K}\left(X\right)$.

Define linear operator $L$ acting on $\mathcal{K}\left(X\right)$
by 
\[
\left(LJ\right)\left(s,t\right):=\sum^{m}_{i=1}J\left(\phi_{i}\left(s\right),\phi_{i}\left(t\right)\right).
\]
For any $J\in\mathcal{K}\left(X\right)$, each pullback kernel 
\[
\left(s,t\right)\mapsto J\left(\phi_{i}\left(s\right),\phi_{i}\left(t\right)\right)
\]
is p.d., hence $LJ\in\mathcal{K}\left(X\right)$. In particular, $L$
is order-preserving: $J_{1}\ge J_{2}$ implies $LJ_{1}\ge LJ_{2}$. 

Assume the subinvariance inequality 
\begin{equation}
LK\ge K.\label{eq:2-1}
\end{equation}
Equivalently, the defect kernel 
\[
D:=LK-K
\]
is positive definite, i.e., $D\in\mathcal{K}\left(X\right)$. 

For all $n\in\mathbb{N}_{0}:=\left\{ 0,1,2,\dots\right\} $, define
the tower of kernels 
\[
K_{n+1}:=LK_{n},\quad K_{0}:=K,
\]
and the defect iterates 
\[
D_{n}:=K_{n+1}-K_{n},\quad D_{0}:=D.
\]

Finally, assume the following finiteness condition: 
\begin{equation}
\sup_{n\ge0}K_{n}\left(s,s\right)<\infty,\quad\forall s\in X.\label{eq:2-2}
\end{equation}

\begin{thm}
\label{thm:2-1}Under the setting above:
\begin{enumerate}
\item For all $s,t\in X$ the limit 
\begin{equation}
K_{\infty}\left(s,t\right):=\lim_{n\to\infty}K_{n}\left(s,t\right)
\end{equation}
exists (finite), and $K_{\infty}$ belongs to $\mathcal{K}\left(X\right)$
satisfying 
\[
K_{\infty}\ge K,\quad LK_{\infty}=K_{\infty}.
\]
Moreover, $K_{\infty}$ is the minimal $L$-invariant majorant of
$K$. That is, if $J\in\mathcal{K}\left(X\right)$, $J\ge K$, and
$LJ=J$, then $J\ge K_{\infty}$.
\item The defects satisfy $D_{n}=L^{n}D$, and for every $N\ge1$, 
\begin{equation}
K_{N}=K+\sum^{N-1}_{n=0}D_{n}.\label{eq:2-3}
\end{equation}
In particular, 
\begin{equation}
K_{\infty}=K+\sum^{\infty}_{n=0}D_{n},\label{eq:2-4}
\end{equation}
where the series converges in the sense of Gram matrices on each finite
subset of $X$.
\item Let $\mathcal{H}_{D_{n}}$ be the RKHS of $D_{n}$, and set 
\begin{equation}
\mathcal{E}:=\mathcal{H}_{K}\oplus\bigoplus_{n\ge0}\mathcal{H}_{D_{n}}.\label{eq:2-5}
\end{equation}
For each $s\in X$ define 
\[
v\left(s\right):=K_{s}\oplus\bigoplus_{n\ge0}\left(D_{n}\right)_{s}\in\mathcal{E}.
\]
Then $v\left(s\right)$ is well-defined and 
\[
K_{\infty}\left(s,t\right)=\left\langle v\left(s\right),v\left(t\right)\right\rangle _{\mathcal{E}}.
\]
Let $P_{0}:\mathcal{E}\to\mathcal{H}_{K}$ be the coordinate projection
onto the first summand, and define $A:=P^{*}_{0}P_{0}$ (a positive
contraction on $\mathcal{E}$). Then 
\[
K\left(s,t\right)=\left\langle v\left(s\right),Av\left(t\right)\right\rangle _{\mathcal{E}}.
\]
Thus, the subinvariance dynamics canonically produces a Radon-Nikodym
compression representation of $K$ inside the invariant completion
$K_{\infty}$, in the sense that $K_{\infty}\left(s,t\right)=\left\langle v\left(s\right),v\left(t\right)\right\rangle _{\mathcal{E}}$
and $K\left(s,t\right)=\left\langle v\left(s\right),Av\left(t\right)\right\rangle _{\mathcal{E}}$
for all $s,t\in X$. 
\end{enumerate}
\end{thm}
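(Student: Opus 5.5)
The argument starts from the order-preserving and linear nature of $L$. Since $K_1 - K_0 = D \ge 0$, induction gives $D_n = K_{n+1}-K_n = L^n(K_1-K_0) = L^n D$. Indeed, $K_{n+1}-K_n = L(K_n - K_{n-1})$, and $L$ maps $\mathcal{K}(X)$ into itself, so every $D_n \ge 0$. Telescoping then yields \eqref{eq:2-3}, which is part (2) apart from convergence.

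For convergence I would use the diagonal bound \eqref{eq:2-2}. The diagonal $K_n(s,s)$ is nondecreasing, because $D_n(s,s)\ge0$, and bounded by \eqref{eq:2-2}, so it converges. For the off-diagonal entries, apply Cauchy--Schwarz to the p.d. kernel $K_N - K_M = \sum_{n=M}^{N-1} D_n$ for $N>M$:
\[
|K_N(s,t)-K_M(s,t)|^2 \le (K_N-K_M)(s,s)\,(K_N-K_M)(t,t).
\]
The right side tends to zero as $M,N\to\infty$, since the diagonals are Cauchy. Hence $K_n(s,t)$ is Cauchy and $K_\infty$ exists. The same estimate gives convergence of the Gram matrices on finite sets, which is \eqref{eq:2-4}.

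The remaining properties in part (1) follow from this. Positive definiteness of $K_\infty$ and of $K_\infty - K = \lim (K_n - K)$ passes to pointwise limits. Invariance follows from $LK_\infty(s,t) = \sum_i \lim_n K_n(\phi_i s,\phi_i t) = \lim_n K_{n+1}(s,t) = K_\infty(s,t)$; this uses only that the sum over $i$ is finite. For minimality, take $J\ge K$ with $LJ=J$. Order preservation gives $J = L^nJ \ge L^nK = K_n$ for every $n$, and $J - K_n \ge 0$ passes to the limit, so $J\ge K_\infty$.

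For part (3), the one point needing care is that $v(s)$ lies in $\mathcal{E}$. This requires
\[
\|K_s\|^2 + \sum_n \|(D_n)_s\|^2_{\mathcal{H}_{D_n}} = K(s,s) + \sum_n D_n(s,s) = K_\infty(s,s) < \infty,
\]
which is exactly the diagonal case of \eqref{eq:2-4}. Once $v(s)\in\mathcal{E}$, the inner product $\langle v(s),v(t)\rangle_{\mathcal{E}} = K(s,t) + \sum_n D_n(s,t)$ converges absolutely by Cauchy--Schwarz on $\ell^2$ and equals $K_\infty(s,t)$, again by \eqref{eq:2-4}.

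For the compression, $P_0 v(t) = K_t$, so $\langle v(s), P_0^*P_0 v(t)\rangle = \langle K_s, K_t\rangle_{\mathcal{H}_K} = K(s,t)$. The operator $A = P_0^*P_0$ is the orthogonal projection onto the first summand, hence a positive contraction.

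I expect the main (mild) obstacle to be the off-diagonal convergence and the well-definedness of $v(s)$. Both reduce to the Cauchy--Schwarz argument on the p.d. tails $\sum_{n\ge M} D_n$, controlled by monotone convergence of the diagonal.
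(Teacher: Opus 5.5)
Your proof is correct, and apart from one step it follows the paper's argument: the defect identities and telescoping, invariance by passing the limit through the finite sum over $i$, minimality via $J=L^{n}J\ge L^{n}K=K_{n}$, and well-definedness of $v(s)$ from $\sum_{n}\Vert (D_{n})_{s}\Vert^{2}=K_{\infty}(s,s)-K(s,s)$.

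The difference is in how $K_{\infty}$ is produced. The paper fixes a finite set and notes that the Gram matrices $G_{n}$ increase in the Hermitian order. It bounds $\Vert G_{n}\Vert\le{\rm tr}(G_{n})$ using the diagonal hypothesis, then invokes monotone convergence of bounded increasing Hermitian matrices. This gives the entrywise limits, positivity of the limit, and the Gram-matrix form of \eqref{eq:2-4} all at once.

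You work entrywise instead. The diagonals converge monotonically, and the $2\times2$ Cauchy--Schwarz inequality applied to the positive definite tail $K_{N}-K_{M}$ makes each off-diagonal sequence Cauchy. This needs only $|J(s,t)|^{2}\le J(s,s)J(t,t)$ for p.d.\ $J$, and it is quantitative. Letting $N\to\infty$ gives $|K_{\infty}(s,t)-K_{M}(s,t)|^{2}\le (K_{\infty}-K_{M})(s,s)\,(K_{\infty}-K_{M})(t,t)$, which is exactly the tail estimate the paper proves separately in Theorem~\ref{thm:3-8}. The cost is that positivity of $K_{\infty}$ and of $K_{\infty}-K$ has to be checked separately by passing to the limit in the quadratic forms, which you do.
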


\begin{proof}
Assumption \eqref{eq:2-1} implies 
\[
K_{1}-K_{0}=LK-K=D_{0}\ge0.
\]
Inductively, if $K_{n}\ge K_{n-1}$, then $K_{n+1}=LK_{n}\ge LK_{n-1}=K_{n}$.
Hence $\left\{ K_{n}\right\} $ is an increasing sequence. In particular,
each difference 
\[
D_{n}:=K_{n+1}-K_{n}
\]
is positive definite. Moreover, from $K_{n+1}=LK_{n}$ we compute
\[
D_{n}=K_{n+1}-K_{n}=LK_{n}-LK_{n-1}=L\left(K_{n}-K_{n-1}\right)=LD_{n-1}.
\]
Thus $D_{n}=L^{n}D$ for all $n\ge0$. Thus, for $N\ge1$, 
\[
K_{N}-K=\sum^{N-1}_{n=0}\left(K_{n+1}-K_{n}\right)=\sum^{N-1}_{n=0}D_{n},
\]
which proves the finite-level energy identity \eqref{eq:2-3}.

Fix a finite set $F=\left\{ x_{1},\dots,x_{r}\right\} \subset X$.
Consider the Gram matrices 
\[
G_{n}:=\left[K_{n}\left(x_{a},x_{b}\right)\right]^{r}_{a,b=1}.
\]
Each $G_{n}$ is positive semidefinite, and $G_{n+1}-G_{n}=\left[D_{n}\left(x_{a},x_{b}\right)\right]$
is positive semidefinite; hence $G_{n}$ is increasing in the order
on Hermitian matrices. By \eqref{eq:2-2}, the diagonal entries $\left(G_{n}\right)_{aa}=K_{n}\left(x_{a},x_{a}\right)$
are uniformly bounded in $n$ for each $a$, and therefore 
\[
{\rm tr}\left(G_{n}\right)=\sum^{r}_{a=1}K_{n}\left(x_{a},x_{a}\right)\le\sum^{r}_{a=1}\sup_{n\ge0}K_{n}\left(x_{a},x_{a}\right)<\infty.
\]
Since $G_{n}\ge0$, one has $\left\Vert G_{n}\right\Vert \le{\rm tr}\left(G_{n}\right)$,
so $\sup_{n}\left\Vert G_{n}\right\Vert <\infty$. Thus $\left\{ G_{n}\right\} $
is increasing and uniformly bounded in operator norm, and hence converges
(in operator norm) to a positive semidefinite matrix 
\[
G_{\infty}:=\lim_{n\to\infty}G_{n}.
\]
In particular, for each $1\le a,b\le r$, 
\[
\left(G_{\infty}\right)_{ab}=\lim_{n\to\infty}K_{n}\left(x_{a},x_{b}\right).
\]
Since this holds for every finite $F$, the pointwise limit 
\[
K_{\infty}\left(s,t\right):=\lim_{n\to\infty}K_{n}\left(s,t\right)
\]
defines a positive definite kernel. The monotonicity gives $K_{\infty}\ge K$
immediately.

To see $LK_{\infty}=K_{\infty}$, fix $s,t\in X$ and compute 
\[
\left(LK_{\infty}\right)\left(s,t\right)=\sum^{m}_{i=1}K_{\infty}\left(\phi_{i}\left(s\right),\phi_{i}\left(t\right)\right)=\sum^{m}_{i=1}\lim_{n\to\infty}K_{n}\left(\phi_{i}\left(s\right),\phi_{i}\left(t\right)\right).
\]
Since $m$ is finite, we may pass the limit through the sum, obtaining
\begin{align*}
\left(LK_{\infty}\right)\left(s,t\right) & =\lim_{n\to\infty}\sum^{m}_{i=1}K_{n}\left(\phi_{i}\left(s\right),\phi_{i}\left(t\right)\right)\\
 & =\lim_{n\to\infty}\left(LK_{n}\right)\left(s,t\right)=\lim_{n\to\infty}K_{n+1}\left(s,t\right)=K_{\infty}\left(s,t\right).
\end{align*}
For minimality: if $J\ge K$ and $LJ=J$, then by positivity of $L$,
\[
J=L^{n}J\ge L^{n}K=K_{n}\quad\text{for all }n.
\]
Taking $n\to\infty$ yields $J\ge K_{\infty}$. This completes (1).

Finally, the identity \eqref{eq:2-4} holds in the sense that for
each finite $F$, the Gram matrices satisfy 
\[
\left[K_{\infty}\left(x_{a},x_{b}\right)\right]_{a,b}=\left[K\left(x_{a},x_{b}\right)\right]_{a,b}+\sum^{\infty}_{n=0}\left[D_{n}\left(x_{a},x_{b}\right)\right]_{a,b},
\]
with convergence in the order on Hermitian matrices because the partial
sums are $G_{N}-G_{0}$ and increase to $G_{\infty}-G_{0}$. This
is the Gram-matrix form of (2).

Define the Hilbert space 
\[
\mathcal{E}:=\mathcal{H}_{K}\oplus\bigoplus_{n\ge0}\mathcal{H}_{D_{n}}.
\]
Fix $s\in X$. We claim that 
\[
\sum_{n\ge0}\left\Vert \left(D_{n}\right)_{s}\right\Vert ^{2}_{\mathcal{H}_{D_{n}}}<\infty,
\]
so that $v\left(s\right)=K_{s}\oplus\bigoplus_{n\ge0}\left(D_{n}\right)_{s}$
is a well-defined element of $\mathcal{E}$. Indeed, in any RKHS $\mathcal{H}_{J}$
one has $\left\Vert J_{s}\right\Vert ^{2}_{\mathcal{H}_{J}}=J\left(s,s\right)$.
Therefore 
\begin{align*}
\sum^{N-1}_{n=0}\left\Vert \left(D_{n}\right)_{s}\right\Vert ^{2}_{\mathcal{H}_{D_{n}}} & =\sum^{N-1}_{n=0}D_{n}\left(s,s\right)\\
 & =\sum^{N-1}_{n=0}\left(K_{n+1}\left(s,s\right)-K_{n}\left(s,s\right)\right)=K_{N}\left(s,s\right)-K\left(s,s\right).
\end{align*}
Letting $N\to\infty$ and using \eqref{eq:2-2} gives finiteness:
\[
\sum_{n\ge0}\left\Vert \left(D_{n}\right)_{s}\right\Vert ^{2}_{\mathcal{H}_{D_{n}}}=K_{\infty}\left(s,s\right)-K\left(s,s\right)<\infty.
\]
Hence $v\left(s\right)\in\mathcal{E}$ for every $s$. Now fix $s,t\in X$.
For each $N$, 
\[
\left\langle K_{s}\oplus\bigoplus^{N-1}_{n=0}\left(D_{n}\right)_{s},\ K_{t}\oplus\bigoplus^{N-1}_{n=0}\left(D_{n}\right)_{t}\right\rangle _{\mathcal{E}}=K\left(s,t\right)+\sum^{N-1}_{n=0}D_{n}\left(s,t\right)=K_{N}\left(s,t\right),
\]
using the RKHS identities $\left\langle K_{s},K_{t}\right\rangle _{\mathcal{H}_{K}}=K\left(s,t\right)$
and $\left\langle \left(D_{n}\right)_{s},\left(D_{n}\right)_{t}\right\rangle _{\mathcal{H}_{D_{n}}}=D_{n}\left(s,t\right)$,
together with the telescoping identity. Let $N\to\infty$. Then $K_{N}\left(s,t\right)\to K_{\infty}\left(s,t\right)$,
and the partial sums in $\mathcal{E}$ converge in norm to $v\left(s\right)$
and $v\left(t\right)$. Therefore the inner products converge, yielding
\[
\left\langle v\left(s\right),v\left(t\right)\right\rangle _{\mathcal{E}}=\lim_{N\to\infty}K_{N}\left(s,t\right)=K_{\infty}\left(s,t\right).
\]
This proves the feature representation in (3). Let $P_{0}:\mathcal{E}\to\mathcal{H}_{K}$
be the coordinate projection onto the first summand, and $A:=P^{*}_{0}P_{0}$.
Then $A$ is a positive contraction on $\mathcal{E}$, and 
\[
\left\langle v\left(s\right),Av\left(t\right)\right\rangle _{\mathcal{E}}=\left\langle P_{0}v\left(s\right),P_{0}v\left(t\right)\right\rangle _{\mathcal{H}_{K}}=\left\langle K_{s},K_{t}\right\rangle _{\mathcal{H}_{K}}=K\left(s,t\right),
\]
as claimed.
\end{proof}

\begin{prop}
\label{prop:2-2}Keep the setting and define $K_{\infty}$ as above.
Let $\mathcal{H}_{K_{\infty}}$ be the RKHS of $K_{\infty}$. On $\mathrm{span}\left\{ \left(K_{\infty}\right)_{s}:s\in X\right\} $
define 
\[
V:\mathrm{span}\left\{ \left(K_{\infty}\right)_{s}:s\in X\right\} \to\mathcal{H}^{\oplus m}_{K_{\infty}},\qquad V\left(K_{\infty}\right)_{s}:=\bigoplus^{m}_{i=1}\left(K_{\infty}\right)_{\phi_{i}\left(s\right)}.
\]
Then $V$ extends to an isometry, and writing $V=\bigoplus^{m}_{i=1}V_{i}$
with $V_{i}\in B\left(\mathcal{H}_{K_{\infty}}\right)$ we have 
\[
\sum^{m}_{i=1}V^{*}_{i}V_{i}=I.
\]
Moreover, there exists a positive contraction $A\in B\left(\mathcal{H}_{K_{\infty}}\right)$
such that 
\begin{equation}
K\left(s,t\right)=\left\langle \left(K_{\infty}\right)_{s},A\left(K_{\infty}\right)_{t}\right\rangle _{\mathcal{H}_{K_{\infty}}}\quad\left(s,t\in X\right),\label{eq:2-6}
\end{equation}
and $LK\ge K$ is equivalent to 
\[
\sum^{m}_{i=1}V^{*}_{i}AV_{i}\ge A
\]
(verified on $\mathrm{span}\left\{ \left(K_{\infty}\right)_{s}:s\in X\right\} $,
hence in the form sense on $\mathcal{H}_{K_{\infty}}$).
\end{prop}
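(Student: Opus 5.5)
The plan has four steps: show $V$ is isometric using $LK_{\infty}=K_{\infty}$, read off $\sum V_i^*V_i=I$, obtain $A$ by a Radon--Nikodym (Douglas-type) argument from $K\le K_{\infty}$, and then convert $LK\ge K$ into the operator inequality by the same bookkeeping.

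\textbf{Isometry.} For a finite combination $f=\sum_a c_a (K_{\infty})_{s_a}$, we compute
\[
\|Vf\|^2=\sum_{i=1}^m\sum_{a,b}\overline{c_a}c_b K_{\infty}(\phi_i(s_a),\phi_i(s_b))=\sum_{a,b}\overline{c_a}c_b (LK_{\infty})(s_a,s_b)=\|f\|^2,
\]
using $LK_{\infty}=K_{\infty}$ from \prettyref{thm:2-1}(1). This also shows that $V$ is well defined: if $f=0$ then $\|Vf\|=0$. Hence $V$ extends by continuity to an isometry $\mathcal{H}_{K_{\infty}}\to\mathcal{H}_{K_{\infty}}^{\oplus m}$. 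Its components $V_i=\pi_iV$, where $\pi_i$ is the $i$-th coordinate projection, are bounded, $V_i(K_{\infty})_s=(K_{\infty})_{\phi_i(s)}$, and $V^*V=\sum_i V_i^*V_i=I$.

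\textbf{The contraction $A$.} Since $K\le K_{\infty}$, for finite combinations we have
\[
\Big|\sum_a c_a K_{s_a}\Big|_{\mathcal{H}_K}^2=\sum\overline{c_a}c_bK(s_a,s_b)\le \Big\|\sum_a c_a (K_{\infty})_{s_a}\Big\|^2_{\mathcal{H}_{K_{\infty}}}.
\]
Thus the map $T:(K_{\infty})_s\mapsto K_s$ is well defined and extends to a contraction $T:\mathcal{H}_{K_{\infty}}\to\mathcal{H}_K$. Set $A:=T^*T$, a positive contraction. Then
\[
\langle (K_{\infty})_s,A(K_{\infty})_t\rangle=\langle K_s,K_t\rangle_{\mathcal{H}_K}=K(s,t),
\]
which is \eqref{eq:2-6}. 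Alternatively, one could transport the operator $A$ of \prettyref{thm:2-1}(3) through the isometry $(K_{\infty})_s\mapsto v(s)$ into $\mathcal{E}$. The direct construction via $T$ is cleaner.

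\textbf{Equivalence.} For $f=\sum_a c_a(K_{\infty})_{s_a}$ we have
\[
\Big\langle f,\sum_iV_i^*AV_if\Big\rangle=\sum_i\langle V_if,AV_if\rangle=\sum_i\sum_{a,b}\overline{c_a}c_bK(\phi_i(s_a),\phi_i(s_b))=\sum_{a,b}\overline{c_a}c_b(LK)(s_a,s_b).
\]
On the other hand, $\langle f,Af\rangle=\sum_{a,b}\overline{c_a}c_bK(s_a,s_b)$. Hence the quadratic form of $\sum_iV_i^*AV_i-A$ on the span equals the Gram form of $LK-K$ at the points $s_a$ with coefficients $c_a$. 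Every finite family of points and coefficients arises this way, and $\sum_iV_i^*AV_i-A$ is bounded while the span is dense. Therefore positivity on the span is equivalent to positivity on all of $\mathcal{H}_{K_{\infty}}$, and both are equivalent to $LK-K\in\mathcal{K}(X)$.

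\textbf{Main obstacle.} The potential difficulty is only well-definedness: the maps $V$ and $T$ are prescribed on kernel sections, which may be linearly dependent. In both cases this is settled by the same norm inequality, which forces any combination equal to zero to be mapped to zero. Everything else is direct computation.
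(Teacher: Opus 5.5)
Your proof is correct. The isometry of $V$ from $LK_{\infty}=K_{\infty}$, the identity $\sum_i V_i^*V_i=V^*V=I$, and the equivalence via the quadratic form of $\sum_i V_i^*AV_i-A$ on the span of kernel sections all follow the paper's argument. You add the well-definedness check and the density/boundedness step that the paper leaves implicit.

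The one real difference is how you obtain $A$. The paper takes the compression $A=P_0^*P_0$ on $\mathcal{E}$ from \prettyref{thm:2-1}(3) and simply asserts that it is ``realized on $\mathcal{H}_{K_{\infty}}$''. You instead build $A=T^*T$ directly from the order relation $K\le K_{\infty}$, using the contraction $T:(K_{\infty})_s\mapsto K_s$.

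The two operators coincide. The map $W:\mathcal{H}_{K_{\infty}}\to\mathcal{E}$, $W(K_{\infty})_s=v(s)$, is isometric because $\langle v(s),v(t)\rangle_{\mathcal{E}}=K_{\infty}(s,t)$. Since $P_0W(K_{\infty})_s=K_s$, you get $T=P_0W$, and so $T^*T=W^*P_0^*P_0W$ is exactly the transported compression.

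Your route is more self-contained and more general. It needs only $K\le K_{\infty}$, works for any positive definite majorant, and carries out the transport step the paper only gestures at. The paper's route identifies $A$ with an explicit coordinate projection on the defect space, which is the form used in the remark after the proposition and in the Gaussian model of \prettyref{sec:4}.
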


\begin{proof}
From $LK_{\infty}=K_{\infty}$ we get, for $s,t\in X$, 
\begin{align*}
\left\langle V\left(K_{\infty}\right)_{s},V\left(K_{\infty}\right)_{t}\right\rangle _{\mathcal{H}^{\oplus m}_{K_{\infty}}} & =\sum^{m}_{i=1}K_{\infty}\left(\phi_{i}\left(s\right),\phi_{i}\left(t\right)\right)\\
 & =K_{\infty}\left(s,t\right)=\left\langle \left(K_{\infty}\right)_{s},\left(K_{\infty}\right)_{t}\right\rangle _{\mathcal{H}_{K_{\infty}}},
\end{align*}
so $V$ is isometric on the dense span of kernel sections and extends
to an isometry. Writing $V=\bigoplus_{i}V_{i}$ then yields 
\[
\left\langle f,g\right\rangle =\left\langle Vf,Vg\right\rangle =\sum_{i}\left\langle V_{i}f,V_{i}g\right\rangle =\left\langle f,\left(\sum\nolimits_{i}V^{*}_{i}V_{i}\right)g\right\rangle ,
\]
hence $\sum_{i}V^{*}_{i}V_{i}=I$. The operator $A$ is the compression
from \prettyref{thm:2-1}, realized on $\mathcal{H}_{K_{\infty}}$
by the identity 
\[
K\left(s,t\right)=\left\langle \left(K_{\infty}\right)_{s},A\left(K_{\infty}\right)_{t}\right\rangle .
\]
Finally, for $s,t\in X$, 
\begin{align*}
\sum_{i}K\left(\phi_{i}\left(s\right),\phi_{i}\left(t\right)\right) & =\sum_{i}\left\langle \left(K_{\infty}\right)_{\phi_{i}\left(s\right)},A\left(K_{\infty}\right)_{\phi_{i}\left(t\right)}\right\rangle \\
 & =\left\langle \left(K_{\infty}\right)_{s},\left(\sum\nolimits_{i}V^{*}_{i}AV_{i}\right)\left(K_{\infty}\right)_{t}\right\rangle ,
\end{align*}
so $LK\ge K$ iff $\sum_{i}V^{*}_{i}AV_{i}\ge A$ on $\mathrm{span}\left\{ \left(K_{\infty}\right)_{s}\right\} $,
hence in the form sense on $\mathcal{H}_{K_{\infty}}$.
\end{proof}

\begin{rem}
\prettyref{prop:2-2} shows that the Radon-Nikodym compression of
$K$ may be realized internally on $\mathcal{H}_{K_{\infty}}$ as
in \eqref{eq:2-6}. In particular, the invariant completion $K_{\infty}$
contains enough information to represent $K$ by a compression. The
auxiliary dilation space $\mathcal{E}$ from \eqref{eq:2-5} is nonetheless
useful. First, it makes the compression operator completely explicit:
on $\mathcal{E}$ one has $A=P^{*}_{0}P_{0}$, where $P_{0}:\mathcal{E}\to\mathcal{H}_{K}$
is the coordinate projection. Second, $\mathcal{E}$ retains the defect
tower as an orthogonal direct sum, so the decomposition \eqref{eq:2-4}
is realized at the Hilbert space level; this is convenient for truncations
and estimates which track contributions of the individual defect kernels
$D_{n}$.

The term ``Radon-Nikodym'' is from the Radon-Nikodym (RN) theorem
for completely positive maps on operator algebras. If $\Phi,\Psi$
are normal completely positive maps on a $C^{*}$-algebra or von Neumann
algebra with $\Phi\leq\Psi$ in the usual order, then in a Stinespring
dilation $(\pi,H_{\Psi},V)$ for $\Psi$ there exists a positive contraction
$0\leq B\leq I_{H_{\Psi}}$ such that $B\in\pi\left(\mathcal{A}\right)'$
and 
\[
\Phi(x)=V^{*}B\pi\left(x\right)V,\qquad x\in\mathcal{A},
\]
see for instance \cite{MR932932,MR1668582,MR1976867} for versions
of this theorem. In that setting $B$ is the RN derivative of $\Phi$
with respect to $\Psi$, acting on the dilation space $H_{\Psi}$.
The present kernel construction is a direct analogue at the level
RKHS (\cite{MR51437,MR3560890,tian2025k}).
\end{rem}

\begin{cor}
\label{cor:2-3} Let $\mathcal{W}_{n}:=\left\{ 1,\dots,m\right\} ^{n}$
be the set of words of length $n$, with $\mathcal{W}_{0}:=\left\{ \emptyset\right\} $.
For $w=i_{1}\cdots i_{n}\in\mathcal{W}_{n}$ set 
\[
\phi_{w}:=\phi_{i_{1}}\circ\cdots\circ\phi_{i_{n}},\qquad\phi_{\emptyset}:={\rm id}_{X}.
\]
Then for every $J\in\mathcal{K}\left(X\right)$ and every $n\ge0$,
\[
\left(L^{n}J\right)\left(s,t\right)=\sum_{w\in\mathcal{W}_{n}}J\left(\phi_{w}\left(s\right),\phi_{w}\left(t\right)\right).
\]
In particular, 
\begin{align*}
K_{n}\left(s,t\right) & =\sum_{w\in\mathcal{W}_{n}}K\left(\phi_{w}\left(s\right),\phi_{w}\left(t\right)\right),\\
D_{n}\left(s,t\right) & =\sum_{w\in\mathcal{W}_{n}}D\left(\phi_{w}\left(s\right),\phi_{w}\left(t\right)\right).
\end{align*}
Moreover, for every $N\ge1$, 
\[
K_{N}\left(s,t\right)=K\left(s,t\right)+\sum^{N-1}_{n=0}\sum_{w\in\mathcal{W}_{n}}D\left(\phi_{w}\left(s\right),\phi_{w}\left(t\right)\right),
\]
and, under \eqref{eq:2-2}, 
\[
K_{\infty}\left(s,t\right)=K\left(s,t\right)+\sum^{\infty}_{n=0}\sum_{w\in\mathcal{W}_{n}}D\left(\phi_{w}\left(s\right),\phi_{w}\left(t\right)\right),
\]
with convergence in the Gram-matrix sense on each finite subset of
$X$.
\end{cor}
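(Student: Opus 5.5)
The word expansion comes from induction on $n$, using only the definition of $L$ and the composition convention $\phi_{w}=\phi_{i_{1}}\circ\cdots\circ\phi_{i_{n}}$. The remaining identities then follow by substitution into \prettyref{thm:2-1}.

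For $n=0$ the claim is trivial, since $\mathcal{W}_{0}=\{\emptyset\}$ and $\phi_{\emptyset}={\rm id}_{X}$. For $n=1$ it is the definition of $L$. For the inductive step, write $L^{n+1}J=L^{n}(LJ)$ and apply the induction hypothesis to the kernel $LJ\in\mathcal{K}(X)$. This gives
\[
(L^{n+1}J)(s,t)=\sum_{w\in\mathcal{W}_{n}}\sum_{i=1}^{m}J\big(\phi_{i}(\phi_{w}(s)),\phi_{i}(\phi_{w}(t))\big).
\]
Now $\phi_{i}\circ\phi_{w}=\phi_{iw}$, where $iw$ is the word obtained by prefixing $i$. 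The map $(i,w)\mapsto iw$ is a bijection $\{1,\dots,m\}\times\mathcal{W}_{n}\to\mathcal{W}_{n+1}$, so the double sum is $\sum_{w'\in\mathcal{W}_{n+1}}J(\phi_{w'}(s),\phi_{w'}(t))$. The one point needing care is the order: we must write $L^{n+1}=L^{n}L$ rather than $L\,L^{n}$, so that the new map is applied last and the result matches $\phi_{w}$ with the innermost map $\phi_{i_{n}}$. Alternatively, with $L^{n+1}=L\,L^{n}$, the new letter is appended as $wi$ and one uses $\phi_{w}\circ\phi_{i}=\phi_{wi}$. Both routes work, since the sum ranges over all words. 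All sums are finite, so nothing else can go wrong.

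Next, apply this with $J=K$, using $K_{n}=L^{n}K$ (immediate from $K_{n+1}=LK_{n}$). Apply it also with $J=D$, using $D_{n}=L^{n}D$ from \prettyref{thm:2-1}(2). This gives the formulas for $K_{n}$ and $D_{n}$.

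Substituting the word expansion of $D_{n}$ into \eqref{eq:2-3} gives the finite-$N$ identity. Under \eqref{eq:2-2}, substituting it into \eqref{eq:2-4} gives the expansion of $K_{\infty}$. The convergence statement is inherited unchanged from \prettyref{thm:2-1}(2): on each finite set, the Gram matrices of the partial sums equal $G_{N}-G_{0}$, which increase to $G_{\infty}-G_{0}$.
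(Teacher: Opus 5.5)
Your proof is correct and is essentially the paper's argument: induction on $n$ for the word expansion of $L^{n}J$, then substitution of $K_{n}=L^{n}K$ and $D_{n}=L^{n}D$ into \eqref{eq:2-3}--\eqref{eq:2-4}, with the Gram-matrix convergence inherited from \prettyref{thm:2-1}. The paper expands $L^{n+1}J=L(L^{n}J)$, which is your ``alternative'' route, so your remark that one \emph{must} use $L^{n}L$ is wrong, as you yourself note when you say both orders work; this does not affect the argument.
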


\begin{proof}
The formula for $L^{n}$ follows by induction on $n$: the case $n=0$
is trivial, and the step $n\to n+1$ is obtained by expanding $L^{n+1}J=L\left(L^{n}J\right)$
and regrouping the resulting sum over words of length $n+1$. The
identities for $K_{n}$ and $D_{n}$ follow from $K_{n}=L^{n}K$ and
$D_{n}=L^{n}D$. The finite and infinite expansions are the identities
from \eqref{eq:2-3}-\eqref{eq:2-4} with these formulas inserted. 
\end{proof}

We now include a concrete example illustrating the full setting above,
including strict subinvariance, the finiteness hypothesis \eqref{eq:2-2},
and a nontrivial defect tower. The example is based on word dynamics
on an $m$-ary tree, with $\phi_{i}$ given by prefixing. The kernel
is chosen so that $L$-invariance holds at the level of an explicit
majorant while the defect decays geometrically, leading to $K_{\infty}$
in closed form. 
\begin{example}
\label{exa:2-5} Fix an integer $m\ge2$ and let 
\[
X:=\mathcal{W}:=\bigcup_{n\ge0}\left\{ 1,\dots,m\right\} ^{n}
\]
be the set of all finite words over $\{1,\dots,m\}$, including the
empty word $\emptyset$. For $w\in\mathcal{W}$ write $\left|w\right|$
for the word length. Define maps $\phi_{i}:X\to X$ by prefixing:
\[
\phi_{i}\left(w\right):=iw,\qquad1\le i\le m.
\]
Define two positive definite kernels $J_{0},J_{1}:X\times X\to\mathbb{C}$
by 
\[
J_{0}\left(u,v\right):=m^{-\left|u\right|}\delta_{u,v},\qquad J_{1}\left(u,v\right):=m^{-\left(\left|u\right|+\left|v\right|\right)/2}.
\]
Then $J_{0}$ is positive definite (diagonal with nonnegative entries),
and $J_{1}$ is positive definite (rank-one), since $J_{1}\left(u,v\right)=f\left(u\right)\overline{f\left(v\right)}$
with $f\left(w\right)=m^{-\left|w\right|/2}$. Moreover, both are
$L$-invariant: 
\[
LJ_{0}=J_{0},\qquad LJ_{1}=J_{1}.
\]
Indeed, for $s,t\in X$, 
\[
\left(LJ_{0}\right)\left(s,t\right)=\sum^{m}_{i=1}m^{-\left|is\right|}\delta_{is,it}=\sum^{m}_{i=1}m^{-\left(\left|s\right|+1\right)}\delta_{s,t}=J_{0}\left(s,t\right),
\]
and 
\[
\left(LJ_{1}\right)\left(s,t\right)=\sum^{m}_{i=1}m^{-\left(\left|is\right|+\left|it\right|\right)/2}=\sum^{m}_{i=1}m^{-\left(\left|s\right|+\left|t\right|\right)/2-1}=J_{1}\left(s,t\right).
\]

Fix parameters $0<r<1$, $0<c<1$, and $\eta>0$. Define a diagonal
positive definite kernel $E$ by 
\[
E\left(u,v\right):=cr^{\left|u\right|}m^{-\left|u\right|}\delta_{u,v}.
\]
Then $LE=rE$, since for $s\in X$, 
\[
\left(LE\right)\left(s,s\right)=\sum^{m}_{i=1}E\left(is,is\right)=\sum^{m}_{i=1}cr^{\left|is\right|}m^{-\left|is\right|}=rcr^{\left|s\right|}m^{-\left|s\right|}=rE\left(s,s\right).
\]

Now set 
\[
J:=J_{0}+\eta J_{1},\qquad K:=J-E.
\]
Since $J_{0}-E$ is diagonal with nonnegative entries (because $0<c<1$),
and $\eta J_{1}\ge0$, it follows that $K$ is positive definite.
Moreover, $K$ is not diagonal because $J_{1}$ contributes off-diagonal
terms: for $u\ne v$ one has $K\left(u,v\right)=\eta m^{-\left(\left|u\right|+\left|v\right|\right)/2}\ne0$.

Finally, the subinvariance inequality \eqref{eq:2-1} holds: 
\[
LK=LJ-LE=J-rE\ge J-E=K,
\]
and the defect kernel is 
\[
D:=LK-K=E-LE=\left(1-r\right)E\ne0,
\]
which is positive definite. The iterates satisfy 
\[
K_{n}=L^{n}K=L^{n}J-L^{n}E=J-r^{n}E,
\]
so $K_{n}\uparrow J$ pointwise and 
\[
K_{\infty}=J.
\]
Moreover, the defect tower is genuinely nontrivial: 
\[
D_{n}=K_{n+1}-K_{n}=r^{n}\left(1-r\right)E\ne0\qquad\left(n\ge0\right).
\]
The diagonal finiteness condition \eqref{eq:2-2} holds because for
every $s\in X$, 
\[
\sup_{n\ge0}K_{n}\left(s,s\right)\le J\left(s,s\right)=m^{-\left|s\right|}+\eta m^{-\left|s\right|}<\infty.
\]
\end{example}

\begin{rem}
In \prettyref{exa:2-5}, setting $\eta=0$ removes the rank-one term
$J_{1}$ and yields a diagonal example (in which $K$ is diagonal
and $K_{\infty}=J_{0}$). Thus the off-diagonal parameter $\eta>0$
may be viewed as a controlled ``correlation'' perturbation of the
diagonal model which preserves the subinvariance dynamics and the
finiteness condition \eqref{eq:2-2}. 
\end{rem}

\section{Diagonal dynamics}\label{sec:3}

The finiteness hypothesis \eqref{eq:2-2} and the existence of the
invariant completion are governed by the diagonal growth of the tower.
In this section, we isolate the diagonal data $u_{n}\left(s\right)=K_{n}\left(s,s\right)$
and show that it evolves under the induced branching operator $P$
on functions, $u_{n+1}=Pu_{n}$. This reduces diagonal boundedness
to a purely combinatorial problem on the rooted $\phi$-tree. We identify
the envelope $h_{\infty}\left(s\right)=\sup_{n\ge0}u_{n}\left(s\right)$
as the minimal $P$-harmonic majorant of $u_{0}$, introduce the forward-invariant
finiteness region $X_{\mathrm{fin}}$, and derive usable criteria,
via Lyapunov superharmonic functions and branch counting estimates,
for deciding whether $h_{\infty}\left(s\right)$ is finite or infinite.

We begin with
\[
u_{0}\left(s\right):=K\left(s,s\right)\in\left[0,\infty\right],
\]
and define the positive operator $P$ on nonnegative functions $u:X\to\left[0,\infty\right]$
by 
\[
\left(Pu\right)\left(s\right):=\sum^{m}_{i=1}u\left(\phi_{i}\left(s\right)\right).
\]
Recall that, for a word $w=i_{1}\cdots i_{n}\in\mathcal{W}$, we set
\[
\phi_{w}:=\phi_{i_{1}}\circ\cdots\circ\phi_{i_{n}},
\]
and we have $K_{n+1}=LK_{n}$, so $K_{n}=L^{n}K$. Set 
\[
u_{n}\left(s\right):=K_{n}\left(s,s\right).
\]

\begin{lem}
\label{lem:3-1}For every $n\ge0$ and $s\in X$, $u_{n+1}\left(s\right)=\left(Pu_{n}\right)\left(s\right)$,
hence $u_{n}=P^{n}u_{0}$. Equivalently, 
\begin{equation}
u_{n}\left(s\right)=\sum_{\left|w\right|=n}u_{0}\left(\phi_{w}\left(s\right)\right).\label{eq:3-1-1}
\end{equation}
\end{lem}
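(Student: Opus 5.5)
The plan is to evaluate the defining recursion $K_{n+1}=LK_{n}$ on the diagonal. Fix $s\in X$ and $n\ge0$, and put $t=s$ in the definition of $L$:
\[
u_{n+1}\left(s\right)=K_{n+1}\left(s,s\right)=\left(LK_{n}\right)\left(s,s\right)=\sum^{m}_{i=1}K_{n}\left(\phi_{i}\left(s\right),\phi_{i}\left(s\right)\right)=\sum^{m}_{i=1}u_{n}\left(\phi_{i}\left(s\right)\right)=\left(Pu_{n}\right)\left(s\right).
\]
Only diagonal values of $K_{n}$ enter this computation, because $L$ evaluates both arguments at the same point $\phi_{i}(s)$. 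That is what makes the diagonal subsystem close up under $P$. A straightforward induction on $n$ then gives $u_{n}=P^{n}u_{0}$.

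For the word expansion \eqref{eq:3-1-1}, there are two routes. The direct one is to invoke \prettyref{cor:2-3} with $J=K$ and set $t=s$ in the formula $K_{n}(s,t)=\sum_{w\in\mathcal{W}_{n}}K(\phi_{w}(s),\phi_{w}(t))$. Alternatively, one can prove $(P^{n}u)(s)=\sum_{|w|=n}u(\phi_{w}(s))$ for an arbitrary nonnegative $u$ by induction. In the inductive step, $(P^{n+1}u)(s)=\sum_{i}(P^{n}u)(\phi_{i}(s))=\sum_{i}\sum_{|w|=n}u(\phi_{w}(\phi_{i}(s)))$. Since $\phi_{w}\circ\phi_{i}=\phi_{wi}$, the map $(w,i)\mapsto wi$ is a bijection from $\mathcal{W}_{n}\times\{1,\dots,m\}$ onto $\mathcal{W}_{n+1}$, so the double sum equals the sum over words of length $n+1$.

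There is no real obstacle here. The only point needing care is the word-ordering convention: with $\phi_{w}=\phi_{i_{1}}\circ\cdots\circ\phi_{i_{n}}$, the newly applied map appears at the end of the word, and the reindexing bijection must reflect this. A second minor point is that $u_{0}$ is allowed to take values in $[0,\infty]$. All sums involved are finite sums of nonnegative terms, so the identities remain valid in $[0,\infty]$ without any convergence issues. Condition \eqref{eq:2-2} is not needed for this lemma.
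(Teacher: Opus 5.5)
Your proposal is correct and follows essentially the same route as the paper: evaluate $K_{n+1}=LK_{n}$ on the diagonal to get $u_{n+1}=Pu_{n}$, iterate, and expand $P^{n}$ over words. Your explicit inductive reindexing via $\phi_{w}\circ\phi_{i}=\phi_{wi}$ (or the appeal to \prettyref{cor:2-3} with $t=s$) just spells out the expansion step that the paper leaves implicit.
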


\begin{proof}
By definition, 
\begin{align*}
u_{n+1}\left(s\right) & =K_{n+1}\left(s,s\right)=\left(LK_{n}\right)\left(s,s\right)\\
 & =\sum^{m}_{i=1}K_{n}\left(\phi_{i}\left(s\right),\phi_{i}\left(s\right)\right)=\sum^{m}_{i=1}u_{n}\left(\phi_{i}\left(s\right)\right)=\left(Pu_{n}\right)\left(s\right).
\end{align*}
Iterating gives $u_{n}=P^{n}u_{0}$, and expanding $P^{n}$ yields
the $\phi$-tree formula \eqref{eq:3-1-1}. 
\end{proof}

A first consequence is that \eqref{eq:2-2} is entirely a $\phi$-driven
growth condition (\prettyref{thm:3-2}), i.e., it is equivalent to
\begin{equation}
\sup_{n\ge0}\left(P^{n}u_{0}\right)\left(s\right)<\infty,\quad\forall s\in X.\label{eq:3-1}
\end{equation}
This already makes the maps $\phi_{i}$ visible: it is not merely
``about $L$'' in the abstract, but about the combinatorics of the
rooted $\phi$-tree $w\mapsto\phi_{w}\left(s\right)$ and how the
diagonal weight $u_{0}\left(s\right)=K\left(s,s\right)$ decays along
it.
\begin{thm}
\label{thm:3-2}Let 
\[
h_{\infty}\left(s\right):=\sup_{n\ge0}u_{n}\left(s\right)=\sup_{n\ge0}\left(P^{n}u_{0}\right)\left(s\right)\in\left[0,\infty\right].
\]
Then the following hold:
\begin{enumerate}
\item $h_{\infty}$ is the minimal $P$-harmonic majorant of $u_{0}$ in
the following sense: if $h:X\to\left[0,\infty\right]$ satisfies 
\[
h\ge u_{0},\qquad Ph=h,
\]
then $h\ge h_{\infty}$. 
\item Let 
\[
X_{\mathrm{fin}}:=\left\{ s\in X:\ h_{\infty}\left(s\right)<\infty\right\} ,\qquad X_{\mathrm{inf}}:=X\setminus X_{\mathrm{fin}}.
\]
Then $X_{\mathrm{fin}}$ is forward invariant for the maps $\phi_{i}$:
if $s\in X_{\mathrm{fin}}$, then $\phi_{i}\left(s\right)\in X_{\mathrm{fin}}$
for every $i$. Equivalently, if $\phi_{i}\left(s\right)\in X_{\mathrm{inf}}$
for some $i$, then $s\in X_{\mathrm{inf}}$. 
\item The finiteness hypothesis \eqref{eq:2-2} holds if and only if $X_{\mathrm{fin}}=X$. 
\end{enumerate}
\end{thm}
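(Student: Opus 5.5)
Everything rests on the monotonicity of the sequence $u_{n}=P^{n}u_{0}$ from \prettyref{lem:3-1}. The tower $K_{n}$ is increasing in the positive definite order; this step of the proof of \prettyref{thm:2-1} uses only \eqref{eq:2-1}, not \eqref{eq:2-2}. Hence $D_{n}(s,s)\ge0$, so $u_{n}(s)$ is nondecreasing in $n$ and $h_{\infty}(s)=\lim_{n}u_{n}(s)\in[0,\infty]$. Together with $P$ being positive and order preserving on $[0,\infty]$-valued functions, this gives all three parts.

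For (1), let $h\ge u_{0}$ with $Ph=h$. Since $P$ is monotone on $[0,\infty]$-valued functions, induction gives $h=P^{n}h\ge P^{n}u_{0}=u_{n}$ for every $n$. Taking the supremum yields $h\ge h_{\infty}$. I would also check that $h_{\infty}$ itself is a $P$-harmonic majorant, so that "minimal" is meaningful. Clearly $h_{\infty}\ge u_{0}$. By monotone convergence for the finite sum over $i$,
\[
(Ph_{\infty})(s)=\sum_{i=1}^{m}\lim_{n}u_{n}(\phi_{i}(s))=\lim_{n}(Pu_{n})(s)=\lim_{n}u_{n+1}(s)=h_{\infty}(s).
\]
This passage of the limit through the sum is the only mildly delicate point, since values $+\infty$ are allowed. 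It is harmless because the terms are nonnegative and nondecreasing and $m$ is finite.

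For (2), I use the identity $h_{\infty}(s)=\sum_{i}h_{\infty}(\phi_{i}(s))$ just proved. Each summand is nonnegative, so $h_{\infty}(\phi_{i}(s))\le h_{\infty}(s)$. Thus finiteness at $s$ forces finiteness at every $\phi_{i}(s)$. The contrapositive is the stated equivalent form. Alternatively, one can argue without harmonicity: $u_{n+1}(s)\ge u_{n}(\phi_{i}(s))$ for each $i$, and taking suprema gives the same bound.

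For (3), note that \eqref{eq:2-2} states precisely $\sup_{n}K_{n}(s,s)=h_{\infty}(s)<\infty$ for all $s$. By definition this is exactly $X_{\mathrm{fin}}=X$, so the equivalence is immediate; it also recovers the reformulation \eqref{eq:3-1}. I expect no real obstacle. The only care needed is to note that monotonicity of $u_{n}$ comes from \eqref{eq:2-1} alone, independently of \eqref{eq:2-2}, and to handle $[0,\infty]$-valued arithmetic in the limit interchange.
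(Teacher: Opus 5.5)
Your proposal is correct and follows the paper's proof essentially step for step: minimality via $h=P^{n}h\ge P^{n}u_{0}=u_{n}$, harmonicity of $h_{\infty}$ by passing the monotone limit through the finite sum over $i$, and (3) read off directly from the definition. The only cosmetic difference is in (2), where you lead with the identity $Ph_{\infty}=h_{\infty}$, whereas the paper uses the level-wise bound $u_{n}(\phi_{i}(s))\le u_{n+1}(s)\le h_{\infty}(s)$, which you also give as your alternative.
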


\begin{proof}
Fix $h$ with $h\ge u_{0}$ and $Ph=h$. Then we have $P^{n}h=h$
for all $n\ge0$. Moreover, since $P$ is positive, $h\ge u_{0}$
implies $P^{n}h\ge P^{n}u_{0}=u_{n}$ for all $n$. Hence $h\ge u_{n}$
for every $n$, and therefore 
\[
h\ge\sup_{n\ge0}u_{n}=h_{\infty}.
\]
This proves the minimality statement in (1). Note that, since $LK\ge K$,
the kernels $K_{n+1}-K_{n}$ are positive definite, hence $u_{n+1}\left(s\right)-u_{n}\left(s\right)=\left(K_{n+1}-K_{n}\right)\left(s,s\right)\ge0$
for every $s$. Thus $u_{n}\left(s\right)\uparrow h_{\infty}\left(s\right)$
pointwise, and because $m$ is finite we may pass to the limit in
the defining sum for $P$: for every $s\in X$, 
\begin{align*}
\left(Ph_{\infty}\right)\left(s\right) & =\sum^{m}_{i=1}h_{\infty}\left(\phi_{i}\left(s\right)\right)=\sum^{m}_{i=1}\lim_{n\to\infty}u_{n}\left(\phi_{i}\left(s\right)\right)\\
 & =\lim_{n\to\infty}\sum^{m}_{i=1}u_{n}\left(\phi_{i}\left(s\right)\right)=\lim_{n\to\infty}u_{n+1}\left(s\right)=h_{\infty}\left(s\right).
\end{align*}
In particular, $h_{\infty}$ itself is $P$-harmonic. 

For (2), assume $h_{\infty}\left(s\right)<\infty$. Then $u_{n+1}\left(s\right)=\sum_{i}u_{n}\left(\phi_{i}\left(s\right)\right)\le h_{\infty}\left(s\right)$
for all $n$. Since each term is nonnegative, for each fixed $i$
we have $u_{n}\left(\phi_{i}\left(s\right)\right)\le u_{n+1}\left(s\right)\le h_{\infty}\left(s\right)$
for all $n$, hence $h_{\infty}\left(\phi_{i}\left(s\right)\right)<\infty$.
This is the forward invariance.

Finally, (3) is immediate from the definition: \eqref{eq:2-2} is
the pointwise finiteness of $\sup_{n}u_{n}\left(s\right)$, i.e.,
$X_{\mathrm{fin}}=X$. 
\end{proof}

\begin{rem}
The diagonal growth dichotomy governs where the invariant completion
from \prettyref{thm:2-1} is genuinely finite. If $s\in X_{\mathrm{inf}}$,
then $u_{n}\left(s\right)=K_{n}\left(s,s\right)\to\infty$, so the
diagonal of the tower diverges at $s$ and the finiteness hypothesis
\eqref{eq:2-2} fails there. On the other hand, on $X_{\mathrm{fin}}$
one has $\sup_{n}K_{n}\left(s,s\right)<\infty$, and the Gram-matrix
argument in \prettyref{thm:2-1}(1) applies on each finite subset
$F\subset X_{\mathrm{fin}}$, giving a well-defined limit kernel $K_{\infty}$
on $X_{\mathrm{fin}}\times X_{\mathrm{fin}}$ with $K_{\infty}\ge K$
and $LK_{\infty}=K_{\infty}$ restricted to $X_{\mathrm{fin}}$. Thus,
\prettyref{sec:2} may be viewed as operating on the forward-invariant
region $X_{\mathrm{fin}}$, while the present section provides dynamical
criteria to determine when $X_{\mathrm{fin}}=X$. 
\end{rem}

A useful conceptual point is that this theorem is already a sharp
``potential-theoretic'' characterization driven by the $\phi$-tree:
the diagonal growth is finite exactly when the diagonal data $u_{0}\left(s\right)=K\left(s,s\right)$
lies under a harmonic function for the branching operator $P$.

The abstract characterization above becomes usable once one has concrete
sufficient conditions phrased directly in terms of the maps $\phi_{i}$.
The following criterion (\prettyref{cor:3-4}) makes the $\phi_{i}$
explicit: it asks for a ``Lyapunov function'' $r$ that does not
grow (on average) along the $\phi$-branches.
\begin{cor}
\label{cor:3-4}Assume there exists a function $r:X\to\left(0,\infty\right)$
and a constant $C<\infty$ such that 
\[
u_{0}\le Cr,\qquad Pr\le r.
\]
Then $h_{\infty}\left(s\right)<\infty$ for all $s$, hence \eqref{eq:3-1}
holds. 
\end{cor}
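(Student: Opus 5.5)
The argument is a direct comparison using positivity of $P$ on nonnegative functions. First, I record that $P$ is order-preserving and positively homogeneous: if $0\le f\le g$ pointwise then $Pf\le Pg$, and $P(cf)=cPf$ for $c\ge0$. Both facts are immediate from the defining finite sum $(Pu)(s)=\sum_{i=1}^{m}u(\phi_i(s))$.

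Next, I iterate the superharmonicity $Pr\le r$. By induction, $P^{n}r\le r$ for all $n\ge0$: the case $n=0$ is trivial, and if $P^{n}r\le r$ then monotonicity gives $P^{n+1}r=P(P^{n}r)\le Pr\le r$.

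Then I combine this with $u_0\le Cr$. Using monotonicity and homogeneity of $P^n$ together with \prettyref{lem:3-1},
\[
u_n=P^{n}u_{0}\le P^{n}(Cr)=C\,P^{n}r\le C\,r
\]
pointwise for every $n$. Taking the supremum over $n$ yields $h_{\infty}(s)\le C\,r(s)<\infty$ for every $s\in X$, because $r$ takes values in $(0,\infty)$. This is exactly \eqref{eq:3-1}. By \prettyref{thm:3-2}(3), it is also equivalent to $X_{\mathrm{fin}}=X$ and to \eqref{eq:2-2}.

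There is no real obstacle in this argument. The only point needing care is that the functions involved may a priori take the value $+\infty$. Here $r$ is finite-valued by hypothesis and $Pr\le r$ keeps every $P^{n}r$ finite, so monotonicity of $P$ applies without any issue with $\infty-\infty$.
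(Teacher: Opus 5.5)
Your proposal is correct and is essentially the paper's argument: the paper runs a single induction $u_{n+1}=Pu_{n}\le C\,Pr\le Cr$, while you first prove $P^{n}r\le r$ and then compare $P^{n}u_{0}\le C\,P^{n}r$. This is the same monotonicity-and-induction comparison, merely split into two steps.
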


\begin{proof}
We show by induction that $u_{n}\le Cr$ for all $n$. For $n=0$
this is $u_{0}\le Cr$. If $u_{n}\le Cr$, then 
\[
u_{n+1}=Pu_{n}\le CPr\le Cr.
\]
Thus $u_{n}\left(s\right)\le Cr\left(s\right)$ for all $n$, so $h_{\infty}\left(s\right)\le Cr\left(s\right)<\infty$. 
\end{proof}

The next result (\prettyref{cor:3-5}) is a complementary ``non-decay''
mechanism: if there is a region where the diagonal kernel mass $u_{0}\left(s\right)=K\left(s,s\right)$
is bounded below, and the $\phi$-tree hits that region along exponentially
many branches, then the diagonal growth must blow up.
\begin{cor}
\label{cor:3-5}Fix $s\in X$. Suppose there exists a subset $Y\subset X$,
constants $\varepsilon>0$, $\rho>1$, and an infinite sequence $n_{k}\to\infty$
such that 
\[
u_{0}\left(y\right)\ge\varepsilon\ \text{for all }y\in Y,\qquad\#\left\{ w:\ \left|w\right|=n_{k},\ \phi_{w}\left(s\right)\in Y\right\} \ \ge\ \rho^{n_{k}}\ \text{for all }k.
\]
Then $h_{\infty}\left(s\right)=\infty$, so $s\in X_{\mathrm{inf}}$. 
\end{cor}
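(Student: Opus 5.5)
The plan is to bound $u_{n_k}(s)$ from below using the word expansion of \prettyref{lem:3-1}. The lower bound should grow without limit along the subsequence $n_k$. Since $h_\infty(s)=\sup_n u_n(s)$, that will force $h_\infty(s)=\infty$.

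First, I would fix $k$ and apply \eqref{eq:3-1-1} with $n=n_k$:
\[
u_{n_k}(s)=\sum_{|w|=n_k}u_0\left(\phi_w(s)\right).
\]
Every term is nonnegative, since $u_0(x)=K(x,x)\ge0$ by positive definiteness. So we may discard all words $w$ with $\phi_w(s)\notin Y$ and keep only the lower bound. For the retained words we have $u_0(\phi_w(s))\ge\varepsilon$, which gives
\[
u_{n_k}(s)\ \ge\ \varepsilon\cdot\#\left\{ w:\ |w|=n_k,\ \phi_w(s)\in Y\right\} \ \ge\ \varepsilon\rho^{n_k}.
\]
Note that the count is over words, not over distinct points. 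Different words may land on the same point, and each word then contributes its own term to the sum, so the counting bound applies exactly as stated.

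Second, because $\rho>1$ and $n_k\to\infty$, we have $\varepsilon\rho^{n_k}\to\infty$. Therefore
\[
h_\infty(s)\ge\sup_k u_{n_k}(s)=\infty,
\]
so $s\in X_{\mathrm{inf}}$ by the definition in \prettyref{thm:3-2}(2). No monotonicity of $u_n$ is needed here, since we only need the supremum. It is still worth remarking that $u_n(s)\uparrow\infty$, because $u_n$ is nondecreasing, as shown in the proof of \prettyref{thm:3-2}.

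There is essentially no obstacle. The only points needing care are the nonnegativity of the discarded terms and the fact that the count is taken over words, which matches the multiplicity in \eqref{eq:3-1-1}. The full strength $\rho>1$ is not even needed: it suffices that $\#\{w : |w|=n_k,\ \phi_w(s)\in Y\}$ is unbounded in $k$.
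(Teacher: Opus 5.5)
Your proof is correct and is essentially the paper's own argument. Both expand $u_{n_k}(s)$ over words via \prettyref{lem:3-1}, discard the nonnegative terms with $\phi_w(s)\notin Y$, and conclude from $u_{n_k}(s)\ge\varepsilon\rho^{n_k}\to\infty$. Your side remark that unboundedness of the word counts already suffices is also right, and it matches \prettyref{thm:3-6}(3)(a).
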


\begin{proof}
By the $\phi$-tree expansion, 
\begin{align*}
u_{n_{k}}\left(s\right) & =\sum_{\left|w\right|=n_{k}}u_{0}\left(\phi_{w}\left(s\right)\right)\\
 & \ge\sum_{\substack{\left|w\right|=n_{k}\\
\phi_{w}\left(s\right)\in Y
}
}u_{0}\left(\phi_{w}\left(s\right)\right)\ge\varepsilon\ \#\left\{ w:\left|w\right|=n_{k},\ \phi_{w}\left(s\right)\in Y\right\} \ge\varepsilon\rho^{n_{k}}.
\end{align*}
Hence $u_{n_{k}}\left(s\right)\to\infty$, so $h_{\infty}\left(s\right)=\infty$. 
\end{proof}

We now characterize the diagonal growth entirely in terms of $\phi$-tree
level-set counts. 
\begin{thm}
\label{thm:3-6} Fix $s\in X$. For each $\theta>0$ and $n\ge0$
define 
\[
N_{n}\left(s,\theta\right):=\#\left\{ w:\left|w\right|=n,u_{0}\left(\phi_{w}\left(s\right)\right)\ge\theta\right\} ,
\]
where $u_{0}\left(s\right)=K\left(s,s\right)$ and $\phi_{w}=\phi_{i_{1}}\circ\cdots\circ\phi_{i_{n}}$
for a word $w=i_{1}\cdots i_{n}\in\mathcal{W}$. Then: 
\begin{enumerate}
\item For every $n\ge0$, 
\begin{equation}
u_{n}\left(s\right)=\int^{\infty}_{0}N_{n}\left(s,\theta\right)d\theta.\label{eq:3-2}
\end{equation}
\item Consequently, 
\begin{equation}
h_{\infty}\left(s\right)=\sup_{n\ge0}u_{n}\left(s\right)\le\int^{\infty}_{0}\left(\sup_{n\ge0}N_{n}\left(s,\theta\right)\right)d\theta.\label{eq:3-3}
\end{equation}
\item In particular: 
\begin{enumerate}
\item If there exists $\theta_{0}>0$ such that $\sup_{n\ge0}N_{n}\left(s,\theta_{0}\right)=\infty$,
then $h_{\infty}\left(s\right)=\infty$, hence $s\in X_{\mathrm{inf}}$. 
\item If 
\begin{equation}
\int^{\infty}_{0}\left(\sup_{n\ge0}N_{n}\left(s,\theta\right)\right)d\theta<\infty,\label{eq:3-4}
\end{equation}
then $h_{\infty}\left(s\right)<\infty$, hence $s\in X_{\mathrm{fin}}$
and so \eqref{eq:3-1} holds at $s$. 
\end{enumerate}
\end{enumerate}
\end{thm}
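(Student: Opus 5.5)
The proof rests on the layer-cake (Cavalieri) formula applied to the finite sum in \prettyref{lem:3-1}. For a single nonnegative number $a\in[0,\infty]$ one has $a=\int_{0}^{\infty}\mathbf{1}_{\{a\ge\theta\}}\,d\theta$. This uses the closed level set $\{a\ge\theta\}$, which differs from the open one only at the single point $\theta=a$, a Lebesgue-null set; the formula also holds trivially when $a=\infty$.

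\emph{Part (1).} By \eqref{eq:3-1-1}, $u_{n}(s)=\sum_{|w|=n}u_{0}(\phi_{w}(s))$. I apply the layer-cake identity to each of the $m^{n}$ terms $a_{w}=u_{0}(\phi_{w}(s))$. Since there are finitely many terms, the sum and the integral can be interchanged. Alternatively, Tonelli applies, because everything is nonnegative and measurable: each $\theta\mapsto\mathbf{1}_{\{a_{w}\ge\theta\}}$ is the indicator of an interval. This gives
\[
u_{n}(s)=\int_{0}^{\infty}\sum_{|w|=n}\mathbf{1}_{\{u_{0}(\phi_{w}(s))\ge\theta\}}\,d\theta=\int_{0}^{\infty}N_{n}(s,\theta)\,d\theta .
\]
I also note for later use that $\theta\mapsto N_{n}(s,\theta)$ is nonincreasing. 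Consequently $\theta\mapsto\sup_{n}N_{n}(s,\theta)$ is nonincreasing as well, hence Borel measurable, so the integral in \eqref{eq:3-3} makes sense in $[0,\infty]$.

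\emph{Part (2).} For each fixed $n$ we have $N_{n}(s,\theta)\le\sup_{k}N_{k}(s,\theta)$ pointwise in $\theta$. Monotonicity of the integral, together with (1), then gives $u_{n}(s)\le\int_{0}^{\infty}\sup_{k}N_{k}(s,\theta)\,d\theta$. Taking the supremum over $n$ and recalling $h_{\infty}=\sup_{n}u_{n}$ from \prettyref{thm:3-2} yields \eqref{eq:3-3}.

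\emph{Part (3).} Statement (b) follows at once from \eqref{eq:3-3}, since $h_{\infty}(s)<\infty$ means $s\in X_{\mathrm{fin}}$, which is \eqref{eq:3-1} at $s$. Statement (a) does not follow from the upper bound, so I use a direct lower bound instead. Every word counted in $N_{n}(s,\theta_{0})$ contributes at least $\theta_{0}$ to the sum, so
\[
u_{n}(s)\ \ge\ \theta_{0}\,N_{n}(s,\theta_{0}).
\]
This is Chebyshev's inequality, and it can equally be read off from (1) by restricting the integral to $[0,\theta_{0}]$ and using monotonicity in $\theta$. Taking the supremum over $n$ gives $h_{\infty}(s)\ge\theta_{0}\sup_{n}N_{n}(s,\theta_{0})=\infty$.

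There is no serious obstacle here. The only points needing care are the measure-theoretic bookkeeping in (1), namely the interchange of the finite sum with the integral and the case $u_{0}=\infty$ at some vertex, and the observation that (3a) requires the Chebyshev lower bound rather than \eqref{eq:3-3}.
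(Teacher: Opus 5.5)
Your proposal is correct and follows the paper's proof essentially step by step. For (1) you use the layer-cake identity summed over the $m^{n}$ words, and for (2) pointwise domination of $N_{n}$ by $\sup_{k}N_{k}$. For (3a) you use the lower bound $u_{n}(s)\ge\theta_{0}N_{n}(s,\theta_{0})$, which the paper derives exactly as in your alternative reading: restrict the integral in (1) to $[0,\theta_{0}]$ and use monotonicity in $\theta$. Your measurability remarks are harmless additions that the paper leaves implicit.
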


\begin{proof}
Fix $s\in X$ and $n\ge0$. By \prettyref{lem:3-1},
\[
u_{n}\left(s\right)=\sum_{\left|w\right|=n}u_{0}\left(\phi_{w}\left(s\right)\right).
\]
For each $a\in\left[0,\infty\right)$ one has the identity 
\[
a=\int^{\infty}_{0}\mathbf{1}_{\left\{ a\ge\theta\right\} }d\theta.
\]
Applying this with $a=u_{0}\left(\phi_{w}\left(s\right)\right)$ and
summing over $\left|w\right|=n$ gives 
\begin{align*}
u_{n}\left(s\right) & =\sum_{\left|w\right|=n}\int^{\infty}_{0}\mathbf{1}_{\left\{ u_{0}\left(\phi_{w}\left(s\right)\right)\ge\theta\right\} }d\theta\\
 & =\int^{\infty}_{0}\left(\sum_{\left|w\right|=n}\mathbf{1}_{\left\{ u_{0}\left(\phi_{w}\left(s\right)\right)\ge\theta\right\} }\right)d\theta=\int^{\infty}_{0}N_{n}\left(s,\theta\right)d\theta,
\end{align*}
where we interchange sum and integral by nonnegativity. This proves
\eqref{eq:3-2}.

For \eqref{eq:3-3}, observe that for each $n$, 
\[
\int^{\infty}_{0}N_{n}\left(s,\theta\right)d\theta\le\int^{\infty}_{0}\left(\sup_{k\ge0}N_{k}\left(s,\theta\right)\right)d\theta.
\]
Taking the supremum over $n$ yields \eqref{eq:3-3}.

For (3), if $\sup_{n}N_{n}\left(s,\theta_{0}\right)=\infty$, then
for every $n$, using that $\theta\mapsto N_{n}\left(s,\theta\right)$
is non-increasing, 
\begin{align*}
u_{n}\left(s\right) & =\int^{\infty}_{0}N_{n}\left(s,\theta\right)d\theta\\
 & \ge\int^{\theta_{0}}_{0}N_{n}\left(s,\theta\right)d\theta\ge\int^{\theta_{0}}_{0}N_{n}\left(s,\theta_{0}\right)d\theta=\theta_{0}N_{n}\left(s,\theta_{0}\right),
\end{align*}
so $\sup_{n}u_{n}\left(s\right)=\infty$, i.e. $h_{\infty}\left(s\right)=\infty$.
The finiteness condition in \eqref{eq:3-4} implies $h_{\infty}\left(s\right)<\infty$
by \eqref{eq:3-3}, hence $s\in X_{\mathrm{fin}}$. 
\end{proof}

\begin{rem}
Corollaries \ref{cor:3-4} and \ref{cor:3-5} are obtained by estimating
$N_{n}\left(s,\theta\right)$ from below on a fixed level set $\left\{ u_{0}\ge\theta\right\} $
(the non-decay mechanism) and from above by dominating $u_{0}$ by
a $P$-superharmonic Lyapunov function (the decay mechanism), which
in turn forces uniform control of the integrand in \eqref{eq:3-3}.
\end{rem}

The next estimate explains why the diagonal criteria from this section
are already useful for controlling the \emph{full} invariant completion:
once the diagonal tail $h_{\infty}-u_{N}$ is small, the entire kernel
tail $K_{\infty}-K_{N}$ is small uniformly on finite sets.
\begin{thm}
\label{thm:3-8} Fix $N\ge0$. On $X_{\mathrm{fin}}$ the limit kernel
$K_{\infty}$ from \prettyref{thm:2-1} is well-defined, and the tail
kernel 
\[
R_{N}\left(s,t\right):=K_{\infty}\left(s,t\right)-K_{N}\left(s,t\right),\qquad s,t\in X_{\mathrm{fin}},
\]
is positive definite. Moreover, for all $s,t\in X_{\mathrm{fin}}$
one has 
\begin{equation}
\left|K_{\infty}\left(s,t\right)-K_{N}\left(s,t\right)\right|^{2}\le\left(K_{\infty}\left(s,s\right)-K_{N}\left(s,s\right)\right)\left(K_{\infty}\left(t,t\right)-K_{N}\left(t,t\right)\right).\label{eq:3-5}
\end{equation}
Equivalently, writing $u_{N}\left(s\right)=K_{N}\left(s,s\right)$
and $h_{\infty}\left(s\right)=K_{\infty}\left(s,s\right)$ on $X_{\mathrm{fin}}$,
\begin{equation}
\left|K_{\infty}\left(s,t\right)-K_{N}\left(s,t\right)\right|\le\sqrt{\left(h_{\infty}\left(s\right)-u_{N}\left(s\right)\right)\left(h_{\infty}\left(t\right)-u_{N}\left(t\right)\right)}.\label{eq:3-6}
\end{equation}
\end{thm}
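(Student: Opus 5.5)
The plan is to reduce everything to the finite-set Gram-matrix argument already used in \prettyref{thm:2-1}(1), but carried out only over finite subsets of $X_{\mathrm{fin}}$. Then \eqref{eq:3-5} follows from the Cauchy--Schwarz inequality for the positive definite tail kernel.

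\emph{Step 1: well-definedness of $K_{\infty}$ on $X_{\mathrm{fin}}$.} Fix a finite set $F=\{x_{1},\dots,x_{r}\}\subset X_{\mathrm{fin}}$ and let $G_{n}=[K_{n}(x_{a},x_{b})]$. Since $LK\ge K$, the sequence $(G_{n})$ increases in the Hermitian order, exactly as in the proof of \prettyref{thm:2-1}. The trace satisfies ${\rm tr}(G_{n})=\sum_{a}u_{n}(x_{a})\le\sum_{a}h_{\infty}(x_{a})<\infty$, because $x_{a}\in X_{\mathrm{fin}}$. Hence $\|G_{n}\|\le{\rm tr}(G_{n})$ is bounded, and $G_{n}\to G_{\infty}$ entrywise. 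This gives a pointwise limit $K_{\infty}(s,t)$ for $s,t\in X_{\mathrm{fin}}$, and it agrees with $h_{\infty}(s)$ on the diagonal. Hypothesis \eqref{eq:2-2} is never used globally, only the finiteness of $h_{\infty}$ at the points involved. This matches the remark following \prettyref{thm:3-2}.

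\emph{Step 2: positivity of $R_{N}$.} For $n\ge N$ we have $G_{n}-G_{N}=\sum_{k=N}^{n-1}[D_{k}(x_{a},x_{b})]\ge0$ by \eqref{eq:2-3}. Passing to the limit $n\to\infty$ preserves positive semidefiniteness, since the cone of positive semidefinite matrices is closed. Therefore $[R_{N}(x_{a},x_{b})]=G_{\infty}-G_{N}\ge0$. As $F$ is arbitrary, $R_{N}$ is positive definite on $X_{\mathrm{fin}}$. Equivalently, $R_{N}=\sum_{n\ge N}D_{n}$ in the Gram sense, mirroring \eqref{eq:2-4}.

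\emph{Step 3: the estimate.} Apply Step 2 with $F=\{s,t\}$. The resulting $2\times2$ matrix $\begin{pmatrix}R_{N}(s,s)&R_{N}(s,t)\\ R_{N}(t,s)&R_{N}(t,t)\end{pmatrix}$ is positive semidefinite. Its Hermitian symmetry gives $R_{N}(t,s)=\overline{R_{N}(s,t)}$, and its nonnegative determinant gives \eqref{eq:3-5}. For \eqref{eq:3-6}, substitute $R_{N}(s,s)=h_{\infty}(s)-u_{N}(s)$ and take square roots; both factors are nonnegative by Step 2.

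There is no real obstacle here. The only point needing care is Step 1: one must note that the trace bound requires only $x_{a}\in X_{\mathrm{fin}}$, so the construction of $K_{\infty}$ does not depend on the global hypothesis \eqref{eq:2-2}.
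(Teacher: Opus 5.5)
Your proposal is correct and follows the paper's proof. Both run the Gram-matrix limit over finite subsets of $X_{\mathrm{fin}}$ and get positivity of $R_{N}$ from the monotonicity of the tower. The only difference is at the end: you read off \eqref{eq:3-5} from the nonnegative $2\times2$ determinant, while the paper applies Cauchy--Schwarz to kernel sections in $\mathcal{H}_{R_{N}}$, which is an equivalent formulation.
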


\begin{proof}
Fix a finite subset $F=\left\{ x_{1},\dots,x_{r}\right\} \subset X_{\mathrm{fin}}$.
Since $x_{a}\in X_{\mathrm{fin}}$, the diagonal sequence $K_{n}\left(x_{a},x_{a}\right)$
is bounded in $n$, hence the Gram-matrix argument from \prettyref{thm:2-1}(1)
applies on $F$ and yields the limit matrix 
\[
G_{\infty}:=\lim_{n\to\infty}\left[K_{n}\left(x_{a},x_{b}\right)\right]_{a,b}.
\]
In particular, $K_{\infty}$ exists on $F\times F$ and the Gram matrices
satisfy 
\[
\left[K_{N}\left(x_{a},x_{b}\right)\right]_{a,b}\le\left[K_{\infty}\left(x_{a},x_{b}\right)\right]_{a,b}
\]
in the order on Hermitian matrices (because $\left\{ K_{n}\right\} $
is increasing). Therefore, for each $N$, the difference matrix 
\[
\left[R_{N}\left(x_{a},x_{b}\right)\right]_{a,b}=\left[K_{\infty}\left(x_{a},x_{b}\right)-K_{N}\left(x_{a},x_{b}\right)\right]_{a,b}
\]
is positive semidefinite. Since this holds for every finite $F\subset X_{\mathrm{fin}}$,
it follows that $R_{N}\in\mathcal{K}\left(X_{\mathrm{fin}}\right)$.

Let $\mathcal{H}_{R_{N}}$ be the RKHS of $R_{N}$. For $s,t\in X_{\mathrm{fin}}$,
the reproducing property gives 
\[
R_{N}\left(s,t\right)=\left\langle \left(R_{N}\right)_{s},\left(R_{N}\right)_{t}\right\rangle _{\mathcal{H}_{R_{N}}}.
\]
Hence, by Cauchy-Schwarz, 
\[
\left|R_{N}\left(s,t\right)\right|^{2}\le\left\Vert \left(R_{N}\right)_{s}\right\Vert ^{2}_{\mathcal{H}_{R_{N}}}\left\Vert \left(R_{N}\right)_{t}\right\Vert ^{2}_{\mathcal{H}_{R_{N}}}=R_{N}\left(s,s\right)R_{N}\left(t,t\right),
\]
using $\left\Vert J_{s}\right\Vert ^{2}_{\mathcal{H}_{J}}=J\left(s,s\right)$
for any p.d. kernel $J$. Substituting $R_{N}=K_{\infty}-K_{N}$ yields
\eqref{eq:3-5}, and \eqref{eq:3-6} is just the same inequality with
$R_{N}\left(s,s\right)=h_{\infty}\left(s\right)-u_{N}\left(s\right)$. 
\end{proof}

A convenient way to use \prettyref{thm:3-8} is to bound the diagonal
tail $h_{\infty}-u_{N}$ via the defect tower.
\begin{cor}
\label{cor:3-9} Let $d_{n}\left(s\right):=D_{n}\left(s,s\right)$
and $d_{0}\left(s\right)=D\left(s,s\right)$. Then on $X_{\mathrm{fin}}$,
for every $N\ge0$ one has 
\begin{equation}
h_{\infty}\left(s\right)-u_{N}\left(s\right)=\sum_{n\ge N}d_{n}\left(s\right).\label{eq:3-7}
\end{equation}
In particular, suppose there exist a function $r:X\to\left(0,\infty\right)$,
constants $C<\infty$ and $\beta\in\left(0,1\right)$ such that 
\[
d_{0}\le Cr,\qquad Pr\le\beta r.
\]
Then for every $N\ge0$ and $s,t\in X_{\mathrm{fin}}$, 
\begin{equation}
\left|K_{\infty}\left(s,t\right)-K_{N}\left(s,t\right)\right|\le\frac{C}{1-\beta}\beta^{N}\sqrt{r\left(s\right)r\left(t\right)}.\label{eq:3-8}
\end{equation}
\end{cor}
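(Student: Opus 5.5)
The proof splits into two parts: first the tail identity \eqref{eq:3-7}, then the quantitative bound \eqref{eq:3-8} obtained by combining it with \prettyref{thm:3-8}.

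For \eqref{eq:3-7}, fix $s\in X_{\mathrm{fin}}$. Evaluating the telescoping identity \eqref{eq:2-3} on the diagonal (applied to $K_M$ and $K_N$ with $M>N$) gives
\[
u_M(s)-u_N(s)=\sum_{n=N}^{M-1}d_n(s),
\]
where every term is nonnegative since each $D_n$ is positive definite. Since $u_M(s)\uparrow h_\infty(s)<\infty$ by \prettyref{thm:3-2}, letting $M\to\infty$ yields \eqref{eq:3-7}, and the series converges.

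For the Lyapunov bound, I would start from \prettyref{thm:2-1}(2), which gives $D_n=L^nD$. Restricting to the diagonal exactly as in \prettyref{lem:3-1}, this becomes $d_n=P^nd_0$. An induction parallel to the proof of \prettyref{cor:3-4} then gives $d_n\le C\beta^nr$: the case $n=0$ is the hypothesis, and the inductive step is
\[
d_{n+1}=Pd_n\le C\beta^nPr\le C\beta^{n+1}r,
\]
which uses positivity of $P$. Summing the geometric tail in \eqref{eq:3-7} gives
\[
h_\infty(s)-u_N(s)\le \frac{C\beta^N}{1-\beta}\,r(s).
\]
Inserting this at $s$ and at $t$ into \eqref{eq:3-6} of \prettyref{thm:3-8} produces \eqref{eq:3-8}, since
\[
\sqrt{\tfrac{C\beta^N}{1-\beta}r(s)\cdot\tfrac{C\beta^N}{1-\beta}r(t)}=\frac{C}{1-\beta}\beta^N\sqrt{r(s)r(t)}.
\]

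There is no real obstacle here. The only points needing care are that $P^n$ applied at points of $X_{\mathrm{fin}}$ may involve values $\phi_w(s)$, which remain in $X_{\mathrm{fin}}$ by forward invariance (\prettyref{thm:3-2}(2)); in any case the inequality $d_n\le C\beta^nr$ holds pointwise on all of $X$. It is also worth noting that the hypotheses actually force $X_{\mathrm{fin}}=X$: the bound $h_\infty\le u_0+\sum_n d_n\le u_0+\frac{C}{1-\beta}r$ is finite everywhere.
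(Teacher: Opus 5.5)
Your proposal is correct and follows essentially the paper's route: $d_n=P^nd_0$ from $D_n=L^nD$, the induction $d_n\le C\beta^n r$, the geometric tail sum, and then \eqref{eq:3-6} from \prettyref{thm:3-8}. The only cosmetic difference is that you get \eqref{eq:3-7} by telescoping on the diagonal and letting $u_M\uparrow h_\infty$, whereas the paper takes diagonals of the Gram-matrix identity \eqref{eq:2-4}; your side remark that these hypotheses force $X_{\mathrm{fin}}=X$ is also correct.
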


\begin{proof}
Since $K_{\infty}=K+\sum_{n\ge0}D_{n}$ in Gram-matrix sense on $X_{\mathrm{fin}}$
(see \prettyref{thm:2-1}), taking diagonals gives 
\[
h_{\infty}\left(s\right)-u_{N}\left(s\right)=K_{\infty}\left(s,s\right)-K_{N}\left(s,s\right)=\sum_{n\ge N}D_{n}\left(s,s\right)=\sum_{n\ge N}d_{n}\left(s\right),
\]
which is \eqref{eq:3-7}. Next, note that $d_{n}\left(s\right)=D_{n}\left(s,s\right)=\left(L^{n}D\right)\left(s,s\right)$,
so by the same diagonal computation as in \prettyref{lem:3-1} one
has 
\[
d_{n+1}=Pd_{n},\qquad d_{n}=P^{n}d_{0}.
\]
Hence $d_{0}\le Cr$ and $Pr\le\beta r$ imply inductively that 
\[
d_{n}=P^{n}d_{0}\le CP^{n}r\le C\beta^{n}r.
\]
Therefore 
\[
h_{\infty}\left(s\right)-u_{N}\left(s\right)=\sum_{n\ge N}d_{n}\left(s\right)\le\sum_{n\ge N}C\beta^{n}r\left(s\right)=\frac{C}{1-\beta}\beta^{N}r\left(s\right).
\]
Combining this diagonal tail estimate with \prettyref{thm:3-8} gives
\eqref{eq:3-8}. 
\end{proof}

\section{Gaussian defect martingales}\label{sec:4}

\prettyref{thm:2-1} gives a canonical splitting of the invariant
completion $K_{\infty}$ into the initial kernel $K$ and the defect
kernels $D_{n}$. In this section we turn that splitting into a probabilistic
object: a Gaussian process whose level truncations form a martingale,
and whose predictable quadratic variation is the defect tower. This
gives a concrete Radon-Nikodym interpretation of the compression operator
$A$.
\begin{thm}
\label{thm:4-1} Assume the setting of \prettyref{sec:2}, and define
\[
\mathcal{E}:=\mathcal{H}_{K}\oplus\bigoplus_{n\ge0}\mathcal{H}_{D_{n}},\qquad v\left(s\right):=K_{s}\oplus\bigoplus_{n\ge0}\left(D_{n}\right)_{s}\in\mathcal{E},\qquad A:=P^{*}_{0}P_{0}\in B\left(\mathcal{E}\right)
\]
as in \prettyref{thm:2-1}(3). Let $\left(G_{0},G_{1},G_{2},\dots\right)$
be a sequence of independent centered complex Gaussian Hilbert-space
vectors with 
\[
G_{0}\in\mathcal{H}_{K},\qquad G_{n+1}\in\mathcal{H}_{D_{n}}\quad\left(n\ge0\right),
\]
each normalized so that for all $f,g$ in the corresponding Hilbert
space, 
\[
\mathbb{E}\left[\overline{\left\langle f,G\right\rangle }\left\langle g,G\right\rangle \right]=\left\langle f,g\right\rangle .
\]
For $N\ge0$ define the truncated random field 
\[
X_{N}\left(s\right):=\left\langle K_{s},G_{0}\right\rangle _{\mathcal{H}_{K}}+\sum^{N-1}_{n=0}\left\langle \left(D_{n}\right)_{s},G_{n+1}\right\rangle _{\mathcal{H}_{D_{n}}},\qquad s\in X,
\]
with the convention that the sum is $0$ when $N=0$. Let $\mathcal{F}_{N}$
be the $\sigma$-algebra generated by $\left(G_{0},G_{1},\dots,G_{N}\right)$.
Then the following hold.
\begin{enumerate}
\item For every $N\ge0$ and all $s,t\in X$, 
\[
\mathbb{E}\left[\overline{X_{N}\left(s\right)}X_{N}\left(t\right)\right]=K_{N}\left(s,t\right).
\]
In particular, each finite-dimensional distribution of $\left(X_{N}\left(s\right)\right)_{s\in X}$
is centered complex Gaussian with covariance kernel $K_{N}$.
\item For each fixed $s\in X$, the sequence $\left(X_{N}\left(s\right)\right)_{N\ge0}$
is an $\left(\mathcal{F}_{N}\right)$-martingale in $L^{2}$, with
orthogonal increments 
\[
X_{N+1}\left(s\right)-X_{N}\left(s\right)=\left\langle \left(D_{N}\right)_{s},G_{N+1}\right\rangle _{\mathcal{H}_{D_{N}}}.
\]
Moreover, for all $s,t\in X$, 
\[
\mathbb{E}\left[\overline{\left(X_{N+1}\left(s\right)-X_{N}\left(s\right)\right)}\left(X_{N+1}\left(t\right)-X_{N}\left(t\right)\right)\ \middle|\ \mathcal{F}_{N}\right]=D_{N}\left(s,t\right),
\]
thus the predictable quadratic variation of the martingale is the
defect tower.
\item Fix a finite subset $F=\left\{ x_{1},\dots,x_{r}\right\} \subset X$.
Then $\left(X_{N}\left(x_{a}\right)\right)^{r}_{a=1}$ converges in
$L^{2}$ (and almost surely) as $N\to\infty$ if and only if the Gram
matrices $\left[K_{N}\left(x_{a},x_{b}\right)\right]_{a,b}$ are uniformly
bounded, equivalently if and only if the limit Gram matrix 
\[
G_{\infty}:=\lim_{N\to\infty}\left[K_{N}\left(x_{a},x_{b}\right)\right]_{a,b}
\]
exists as in the Gram-matrix argument of \prettyref{thm:2-1}(1).
In that case the limit 
\[
X_{\infty}\left(x_{a}\right):=\lim_{N\to\infty}X_{N}\left(x_{a}\right)
\]
is centered complex Gaussian and satisfies 
\[
\mathbb{E}\left[\overline{X_{\infty}\left(x_{a}\right)}X_{\infty}\left(x_{b}\right)\right]=K_{\infty}\left(x_{a},x_{b}\right)\qquad\left(1\le a,b\le r\right).
\]
\item Let $G:=G_{0}\oplus\bigoplus_{n\ge0}G_{n+1}\in\mathcal{E}$, and define
two random fields on $X$ by 
\[
Z\left(s\right):=\left\langle v\left(s\right),G\right\rangle _{\mathcal{E}},\qquad Y\left(s\right):=\left\langle v\left(s\right),A^{1/2}G\right\rangle _{\mathcal{E}}.
\]
Then $Z$ is centered complex Gaussian with covariance $K_{\infty}$,
while $Y$ is centered complex Gaussian with covariance $K$, namely
\[
\mathbb{E}\left[\overline{Z\left(s\right)}Z\left(t\right)\right]=K_{\infty}\left(s,t\right),\qquad\mathbb{E}\left[\overline{Y\left(s\right)}Y\left(t\right)\right]=K\left(s,t\right).
\]
Thus, $A$ is the Radon-Nikodym compression operator in the sense
of a Gaussian covariance contraction. 
\end{enumerate}
\end{thm}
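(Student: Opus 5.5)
The plan is to reduce all four parts to a single computation: the covariance of linear functionals of the independent Gaussian vectors $G_n$. By the normalization, for $f,g$ in the same Hilbert space we have $\mathbb{E}[\overline{\langle f,G\rangle}\langle g,G\rangle]=\langle f,g\rangle$. Independence and centering make cross terms between different $G_j$ vanish. We also record that each $\langle f,G_n\rangle$ is centered complex Gaussian, so finite linear combinations are jointly centered complex Gaussian.

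\emph{Part (1).} Expand $\overline{X_N(s)}X_N(t)$ as a double sum and keep only the diagonal terms. This gives $K(s,t)+\sum_{n<N}D_n(s,t)$, which equals $K_N(s,t)$ by \eqref{eq:2-3}. Joint Gaussianity of $(X_N(x_a))_a$ gives the statement about finite-dimensional distributions.

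\emph{Part (2).} $X_N(s)$ is $\mathcal{F}_N$-measurable, since it involves only $G_0,\dots,G_N$. The increment $\langle (D_N)_s,G_{N+1}\rangle$ is independent of $\mathcal{F}_N$ and centered, so the martingale property follows. Orthogonality of increments is the vanishing of cross terms already noted. For the conditional covariance, independence from $\mathcal{F}_N$ makes the conditional expectation equal to the unconditional one, which is $D_N(s,t)$ by the normalization.

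\emph{Part (3).} Use the $L^2$ isometry. Part (1) gives $\mathbb{E}|X_M(x_a)-X_N(x_a)|^2=K_M(x_a,x_a)-K_N(x_a,x_a)$, the sum of $D_n(x_a,x_a)$ over $N\le n<M$. So $L^2$-Cauchyness is equivalent to convergence of the increasing diagonal sequence. That convergence is equivalent to boundedness of the diagonals, hence to uniform boundedness of the Gram matrices via $\|G_n\|\le\mathrm{tr}\,G_n$, and to existence of $G_\infty$ as in \prettyref{thm:2-1}(1). Conversely, $L^2$ convergence forces bounded second moments, which are exactly the diagonal entries. Almost sure convergence then comes from the martingale convergence theorem for $L^2$-bounded martingales; alternatively, independent increments with summable variances suffice. $L^2$ limits of centered Gaussians are centered Gaussian, and covariances pass to the limit, giving $K_\infty$.

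\emph{Part (4).} This is the main subtlety. $G$ is not a genuine random element of the infinite-dimensional Hilbert space $\mathcal{E}$: an isonormal Gaussian is not norm-summable. I would therefore interpret $\langle v(s),G\rangle_{\mathcal{E}}$ as the isonormal (Paley--Wiener) extension, namely the $L^2$ limit of $X_N(s)$. This limit exists by part (3) under \eqref{eq:2-2}, since $v(s)\in\mathcal{E}$ by \prettyref{thm:2-1}(3). The covariance is then $\langle v(s),v(t)\rangle_{\mathcal{E}}=K_\infty(s,t)$. For $Y$, note that $A=P_0^*P_0$ is an orthogonal projection, so $A^{1/2}=A$. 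Hence $\langle v(s),A^{1/2}G\rangle=\langle A v(s),G\rangle=\langle K_s,G_0\rangle_{\mathcal{H}_K}=X_0(s)$, whose covariance is $\langle Av(s),Av(t)\rangle=\langle v(s),Av(t)\rangle=K(s,t)$ by \prettyref{thm:2-1}(3). The obstacle I expect is only this interpretive step, making sense of $G$ as a generalized Gaussian vector; once that is fixed, everything reduces to the isometry $\mathbb{E}[\overline{\langle f,G\rangle}\langle g,G\rangle]=\langle f,g\rangle_{\mathcal{E}}$.
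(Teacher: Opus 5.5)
Your proposal is correct and follows the paper's proof: each part reduces to the normalization $\mathbb{E}[\overline{\langle f,G\rangle}\langle g,G\rangle]=\langle f,g\rangle$ together with independence and centering across levels, exactly as in the paper. Your refinements are sound and tighten the paper's sketch in four places:
\begin{itemize}
\item The increment identity $\mathbb{E}|X_M(x_a)-X_N(x_a)|^2=K_M(x_a,x_a)-K_N(x_a,x_a)$ justifies the $L^2$-Cauchy criterion, which the paper only asserts.
\item The $L^2$-bounded martingale convergence theorem supplies the almost-sure claim, which the paper never proves.
\item Reading $G$ as an isonormal process, with $\langle v(s),G\rangle$ defined as the $L^2$-limit of $X_N(s)$, repairs the paper's formal treatment of $G$ as an element of the possibly infinite-dimensional space $\mathcal{E}$. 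The same caveat applies harmlessly to each $G_n$, since only the pairings $\langle f,G_n\rangle$ are used.
\item Observing that $A=P_0^*P_0$ is an orthogonal projection, so $A^{1/2}=A$ and $Y=X_0$, is more precise than the paper's appeal to $A^{1/2}$ being a contraction.
\end{itemize}
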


\begin{proof}
(1) Fix $N$ and $s,t\in X$. Independence of the Gaussian vectors
across levels gives 
\[
\mathbb{E}\left[\overline{X_{N}\left(s\right)}X_{N}\left(t\right)\right]=\mathbb{E}\left[\overline{\left\langle K_{s},G_{0}\right\rangle }\left\langle K_{t},G_{0}\right\rangle \right]+\sum^{N-1}_{n=0}\mathbb{E}\left[\overline{\left\langle \left(D_{n}\right)_{s},G_{n+1}\right\rangle }\left\langle \left(D_{n}\right)_{t},G_{n+1}\right\rangle \right].
\]
By the normalization of each Gaussian vector and the RKHS identity
$\left\langle J_{s},J_{t}\right\rangle _{\mathcal{H}_{J}}=J\left(s,t\right)$,
the first term equals $K\left(s,t\right)$ and the $n$th term in
the sum equals $D_{n}\left(s,t\right)$. Hence 
\[
\mathbb{E}\left[\overline{X_{N}\left(s\right)}X_{N}\left(t\right)\right]=K\left(s,t\right)+\sum^{N-1}_{n=0}D_{n}\left(s,t\right)=K_{N}\left(s,t\right)
\]
by \eqref{eq:2-3}. This proves (1).

(2) Fix $s\in X$. Since $X_{N}\left(s\right)$ depends only on $\left(G_{0},\dots,G_{N}\right)$,
it is $\mathcal{F}_{N}$-measurable. Moreover, 
\[
X_{N+1}\left(s\right)=X_{N}\left(s\right)+\left\langle \left(D_{N}\right)_{s},G_{N+1}\right\rangle .
\]
Because $G_{N+1}$ is independent of $\mathcal{F}_{N}$ and centered,
\[
\mathbb{E}\left[X_{N+1}\left(s\right)\ \middle|\ \mathcal{F}_{N}\right]=X_{N}\left(s\right)+\mathbb{E}\left[\left\langle \left(D_{N}\right)_{s},G_{N+1}\right\rangle \right]=X_{N}\left(s\right),
\]
so $\left(X_{N}\left(s\right)\right)$ is a martingale. Orthogonality
of increments follows from independence across levels. For the conditional
covariance identity, again using independence and the normalization,
\[
\mathbb{E}\left[\overline{\left\langle \left(D_{N}\right)_{s},G_{N+1}\right\rangle }\left\langle \left(D_{N}\right)_{t},G_{N+1}\right\rangle \ \middle|\ \mathcal{F}_{N}\right]=\left\langle \left(D_{N}\right)_{s},\left(D_{N}\right)_{t}\right\rangle _{\mathcal{H}_{D_{N}}}=D_{N}\left(s,t\right),
\]
which is the claimed predictable quadratic variation identity.

(3) Fix $F=\left\{ x_{1},\dots,x_{r}\right\} $. By (1), the covariance
matrix of the centered Gaussian vector $\left(X_{N}\left(x_{a}\right)\right)^{r}_{a=1}$
is the Gram matrix $G_{N}=\left[K_{N}\left(x_{a},x_{b}\right)\right]_{a,b}$.
The sequence converges in $L^{2}$ (equivalently, is Cauchy in $L^{2}$)
if and only if $\sup_{N}\left\Vert G_{N}\right\Vert <\infty$, which
is equivalent to the existence of the limit Gram matrix $G_{\infty}$
as in the Gram-matrix argument of \prettyref{thm:2-1}(1). In that
case the limit is necessarily centered Gaussian with covariance $G_{\infty}$,
hence corresponds to $K_{\infty}$ on $F\times F$.

(4) The random vector $G\in\mathcal{E}$ is centered Gaussian with
covariance equal to the identity on $\mathcal{E}$ in the same sense
as above. Therefore, for $Z\left(s\right)=\left\langle v\left(s\right),G\right\rangle $,
\[
\mathbb{E}\left[\overline{Z\left(s\right)}Z\left(t\right)\right]=\left\langle v\left(s\right),v\left(t\right)\right\rangle _{\mathcal{E}}=K_{\infty}\left(s,t\right)
\]
by \prettyref{thm:2-1}(3). Similarly, since $A^{1/2}$ is a contraction,
\[
\mathbb{E}\left[\overline{Y\left(s\right)}Y\left(t\right)\right]=\left\langle v\left(s\right),Av\left(t\right)\right\rangle _{\mathcal{E}}=K\left(s,t\right),
\]
again by \prettyref{thm:2-1}(3). This completes the proof. 
\end{proof}

\begin{rem}
\label{rem:4-2} The theorem is a probabilistic interpretation of
the defect tower: $X_{N}$ is the level-$N$ partial energy, and $D_{N}$
is  the conditional covariance of the $N$th increment. In particular,
on each finite subset $F\subset X$ the convergence of the invariant
completion $K_{\infty}$ is equivalent to the $L^{2}$-boundedness
of this martingale, so the finiteness hypothesis \eqref{eq:2-2} is
a martingale boundedness condition expressed on diagonals. 
\end{rem}

\section{Boundary model and Doob transform}\label{sec:5}

We now make explicit the tree/pathspace picture which has been implicit
in our discussion, and we obtain the canonical path measures attached
to the minimal harmonic majorant $h_{\infty}$ from \prettyref{sec:3}.
The point is that the diagonal harmonic object $h_{\infty}$ produces
a natural family of Markov measures on the boundary $\Omega$, and
these measures organize the iterates of $L$ (after a canonical normalization)
as boundary averages.

Throughout this section we assume the standing hypotheses of Sections
\ref{sec:2}--\ref{sec:4}, and we write 
\[
h:=h_{\infty}
\]
for the minimal $P$-harmonic majorant of $u_{0}\left(s\right)=K\left(s,s\right)$
(constructed in \prettyref{sec:3}). We recall that $h$ is $P$-harmonic,
meaning 
\[
h\left(s\right)=\sum^{m}_{i=1}h\left(\varphi_{i}\left(s\right)\right),
\]
and that $h\left(s\right)\in\left(0,\infty\right)$ precisely on the
set $X_{\mathrm{fin}}$ where the diagonal growth is finite. In what
follows we work on 
\[
X^{+}_{\mathrm{fin}}:=\left\{ s\in X_{\mathrm{fin}}:h\left(s\right)>0\right\} ,
\]
where $K_{\infty}$ is defined by the finite-set Gram-matrix limits.

Let 
\[
\Omega:=\left\{ 1,\dots,m\right\} ^{\mathbb{N}}
\]
be the pathspace, equipped with its product $\sigma$-algebra $\mathcal{F}$
generated by cylinder sets. For a finite word $w=i_{1}\cdots i_{n}\in\left\{ 1,\dots,m\right\} ^{n}$
we write 
\[
\left[w\right]:=\left\{ \omega\in\Omega:\omega_{1}\cdots\omega_{n}=w\right\} .
\]
We also write $\omega|n=\omega_{1}\cdots\omega_{n}$ for the length-$n$
prefix. For $s\in X$ and $w=i_{1}\cdots i_{n}$ we keep the usual
composition notation 
\[
\varphi_{w}=\varphi_{i_{1}}\circ\cdots\circ\varphi_{i_{n}},\qquad\varphi_{\emptyset}=\mathrm{id}_{X},
\]
and we also define the reversed composition 
\[
\check{\varphi}_{w}:=\varphi_{i_{n}}\circ\cdots\circ\varphi_{i_{1}},\qquad\check{\varphi}_{\emptyset}=\mathrm{id}_{X}.
\]
We then define the branch points along $\omega$ by 
\[
s_{n}\left(\omega\right):=\check{\varphi}_{\omega|n}\left(s\right),\qquad n\geq0.
\]

The harmonic function $h$ defines canonical transition probabilities
on $X_{\mathrm{fin}}$. For $s\in X^{+}_{\mathrm{fin}}$, set 
\[
p_{i}\left(s\right):=\frac{h\left(\varphi_{i}\left(s\right)\right)}{h\left(s\right)},\qquad i=1,\dots,m.
\]
Then $p_{i}\left(s\right)\geq0$ and $\sum^{m}_{i=1}p_{i}\left(s\right)=1$
for every $s\in X^{+}_{\mathrm{fin}}$, by $P$-harmonicity of $h$.
\begin{prop}
\label{prop:5-1} For each $s\in X^{+}_{\mathrm{fin}}$ there is a
unique probability measure $\mu_{s}$ on $\left(\Omega,\mathcal{F}\right)$
such that for every word $w=i_{1}\cdots i_{n}$, 
\begin{equation}
\mu_{s}\left(\left[w\right]\right)=\frac{h\left(\check{\varphi}_{w}\left(s\right)\right)}{h\left(s\right)}=\prod^{n}_{k=1}p_{i_{k}}\left(\check{\varphi}_{i_{1}\cdots i_{k-1}}\left(s\right)\right).\label{eq:5-1}
\end{equation}
Moreover, if $\mathcal{F}_{n}$ denotes the $\sigma$-algebra generated
by cylinders of length $n$, then under $\mu_{s}$ the process $\left(s_{n}\left(\omega\right)\right)_{n\geq0}$
is a time-homogeneous Markov chain with transitions 
\begin{equation}
\mu_{s}\left(s_{n+1}=\varphi_{i}\left(x\right)\mid s_{n}=x\right)=p_{i}\left(x\right),\qquad x\in X^{+}_{\mathrm{fin}}.\label{eq:5-2}
\end{equation}
Equivalently, for every bounded function $f$ on $X^{+}_{\mathrm{fin}}$
and every $n\geq0$, 
\begin{equation}
\mathbb{E}_{\mu_{s}}\left[f\left(s_{n+1}\right)\mid\mathcal{F}_{n}\right]=\sum^{m}_{i=1}p_{i}\left(s_{n}\right)f\left(\varphi_{i}\left(s_{n}\right)\right).\label{eq:5-3}
\end{equation}
\end{prop}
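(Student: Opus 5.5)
The plan is to build $\mu_s$ by Kolmogorov extension from a consistent family of cylinder weights, and then read off the Markov property directly from the product formula.

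First we check that the set function is well-defined along any branch. For $s\in X^{+}_{\mathrm{fin}}$ and a word $w=i_{1}\cdots i_{n}$, set
\[
\nu_{s}([w]):=h(\check{\varphi}_{w}(s))/h(s).
\]
Forward invariance of $X_{\mathrm{fin}}$ (\prettyref{thm:3-2}(2)) gives $h(\check{\varphi}_{w}(s))<\infty$. Positivity can fail: some $h(\varphi_{i}(x))$ may vanish. In that case we simply define $p_{i}(x)=0$, and the cylinder receives mass $0$.

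Next we prove the product formula in \eqref{eq:5-1} by induction on $n$. Since $\check{\varphi}_{wi}=\varphi_{i}\circ\check{\varphi}_{w}$, we have
\[
\frac{h(\check{\varphi}_{wi}(s))}{h(s)}=\frac{h(\check{\varphi}_{w}(s))}{h(s)}\cdot p_{i}(\check{\varphi}_{w}(s))
\]
whenever $h(\check{\varphi}_{w}(s))>0$. If instead $h(\check{\varphi}_{w}(s))=0$, harmonicity and nonnegativity force $h(\varphi_{i}(\check{\varphi}_{w}(s)))=0$ for all $i$. Both sides then vanish, under the convention that the product is $0$ once a factor involves a point with $h=0$.

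Consistency follows from harmonicity of $h$ (\prettyref{thm:3-2}(1)):
\[
\sum_{i}\nu_{s}([wi])=\frac{\sum_{i}h(\varphi_{i}(\check{\varphi}_{w}(s)))}{h(s)}=\frac{h(\check{\varphi}_{w}(s))}{h(s)}=\nu_{s}([w]),
\]
and $\nu_{s}([\emptyset])=1$. Since $\{1,\dots,m\}$ is finite, the cylinder algebra on $\Omega$ is compact: every cylinder is clopen in the compact product topology. A finitely additive set function on it is therefore automatically countably additive, because any countable disjoint cover of a cylinder by cylinders has a finite subcover. Carathéodory's theorem (equivalently, the Kolmogorov extension theorem) then yields $\mu_{s}$. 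Uniqueness follows from the $\pi$-$\lambda$ theorem, since cylinders form a $\pi$-system generating $\mathcal{F}$.

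For the Markov property we compute on a cylinder. Fix $w$ of length $n$ with $\mu_{s}([w])>0$, so $x:=s_{n}=\check{\varphi}_{w}(s)\in X^{+}_{\mathrm{fin}}$. The product formula gives
\[
\mu_{s}([wi])/\mu_{s}([w])=p_{i}(x).
\]
Since $\mathcal{F}_{n}$ is generated by the finitely many atoms $[w]$, this yields \eqref{eq:5-3} $\mu_s$-a.s. Take any bounded $f$ and integrate $f(s_{n+1})$ over $[w]$, noting that $s_{n+1}=\varphi_{i}(x)$ on $[wi]$. This gives $\mu_{s}([w])\sum_{i}p_{i}(x)f(\varphi_{i}(x))$, which is \eqref{eq:5-3}. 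Equation \eqref{eq:5-2} follows by taking $f$ to be an indicator and then conditioning on $\{s_{n}=x\}$, which is a union of atoms $[w]$ of $\mathcal{F}_{n}$. The transition depends only on $x$, so the chain is time-homogeneous.

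The one point needing care is the possible degeneracy where $h(\varphi_{i}(x))=0$ for some $i$. There the chain lands on points outside $X^{+}_{\mathrm{fin}}$ with probability zero, so everything holds $\mu_{s}$-almost surely, and $s_{n}\in X^{+}_{\mathrm{fin}}$ a.s. for all $n$. Beyond this, the countable additivity step is routine by compactness, and the rest is bookkeeping.
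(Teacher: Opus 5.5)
Your proposal is correct and follows essentially the paper's proof: cylinder weights $h(\check{\varphi}_{w}(s))/h(s)$, consistency from $P$-harmonicity of $h$, Kolmogorov extension, and the Markov property read off from $\mu_{s}([wi])/\mu_{s}([w])=p_{i}(\check{\varphi}_{w}(s))$; your handling of points where $h$ vanishes, the compactness argument for countable additivity, and the $\pi$-$\lambda$ uniqueness step supply details the paper leaves implicit. One caveat, which the paper shares: if $\varphi_{j}(x)=\varphi_{i}(x)$ for some $j\neq i$, then taking $f$ to be an indicator in \eqref{eq:5-3} gives $\sum_{j:\varphi_{j}(x)=\varphi_{i}(x)}p_{j}(x)$ rather than $p_{i}(x)$, so \eqref{eq:5-2} holds literally only as a statement about the next letter $\omega_{n+1}$, or when the points $\varphi_{1}(x),\dots,\varphi_{m}(x)$ are distinct.
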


\begin{proof}
Define a set function on cylinders by 
\[
\mu_{s}\left(\left[w\right]\right):=\frac{h\left(\check{\varphi}_{w}\left(s\right)\right)}{h\left(s\right)}.
\]
Consistency follows from harmonicity: 
\begin{align*}
\sum^{m}_{i=1}\mu_{s}\left(\left[wi\right]\right) & =\sum^{m}_{i=1}\frac{h\left(\check{\varphi}_{wi}\left(s\right)\right)}{h\left(s\right)}\\
 & =\frac{\sum^{m}_{i=1}h\left(\varphi_{i}\left(\check{\varphi}_{w}\left(s\right)\right)\right)}{h\left(s\right)}=\frac{h\left(\check{\varphi}_{w}\left(s\right)\right)}{h\left(s\right)}=\mu_{s}\left(\left[w\right]\right).
\end{align*}
Hence $\mu_{s}$ defines a unique probability measure on $\left(\Omega,\mathcal{F}\right)$
by Kolmogorov extension. The product formula \eqref{eq:5-1} follows:
\[
\frac{h\left(\check{\varphi}_{w}\left(s\right)\right)}{h\left(s\right)}=\prod^{n}_{k=1}\frac{h\left(\check{\varphi}_{i_{1}\cdots i_{k}}\left(s\right)\right)}{h\left(\check{\varphi}_{i_{1}\cdots i_{k-1}}\left(s\right)\right)}=\prod^{n}_{k=1}p_{i_{k}}\left(\check{\varphi}_{i_{1}\cdots i_{k-1}}\left(s\right)\right).
\]
The Markov property is immediate from the cylinder specification.
Indeed, conditioning on $\mathcal{F}_{n}$ is conditioning on the
prefix $\omega|n$, hence on the current state $s_{n}=\check{\varphi}_{\omega|n}\left(s\right)$,
and the conditional distribution of $\omega_{n+1}$ is given by $p_{i}\left(s_{n}\right)$,
which is \eqref{eq:5-2} . The identity \eqref{eq:5-3} is the standard
reformulation of this transition rule. 
\end{proof}

It is convenient to isolate the Doob transformed averaging operator
on functions induced by the measures $\mu_{s}$. Define, for bounded
$f$ on $X^{+}_{\mathrm{fin}}$, 
\begin{equation}
\left(Qf\right)\left(s\right):=\sum^{m}_{i=1}p_{i}\left(s\right)f\left(\varphi_{i}\left(s\right)\right)=\frac{1}{h\left(s\right)}\sum^{m}_{i=1}h\left(\varphi_{i}\left(s\right)\right)f\left(\varphi_{i}\left(s\right)\right).\label{eq:5-4}
\end{equation}
Then $Q$ is a Markov operator on functions on $X^{+}_{\mathrm{fin}}$,
and it is intertwined with the original branching operator $P$ by
the gauge $h$.
\begin{lem}
\label{lem:5-2} For every bounded $f$ on $X^{+}_{\mathrm{fin}}$,
\[
P\left(hf\right)=h\left(Qf\right).
\]
Consequently, for every $n\geq0$, 
\[
P^{n}\left(hf\right)=h\left(Q^{n}f\right),\qquad Q^{n}f\left(s\right)=\mathbb{E}_{\mu_{s}}\left[f\left(s_{n}\right)\right].
\]
\end{lem}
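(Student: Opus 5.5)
The first identity is a pointwise algebraic computation. Fix $s\in X^{+}_{\mathrm{fin}}$ and a bounded $f$ on $X^{+}_{\mathrm{fin}}$. Since $X_{\mathrm{fin}}$ is forward invariant by \prettyref{thm:3-2}(2), each $\varphi_{i}(s)$ lies in $X_{\mathrm{fin}}$. Some images may have $h(\varphi_{i}(s))=0$, i.e.\ fall outside $X^{+}_{\mathrm{fin}}$; there $f$ is undefined, but it enters only multiplied by $h(\varphi_{i}(s))=0$. I will adopt the convention $(hf)(x)=0$ whenever $h(x)=0$, and under that convention $Q$ in \eqref{eq:5-4} is still well defined. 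Then
\[
P(hf)(s)=\sum_{i=1}^{m}h(\varphi_{i}(s))f(\varphi_{i}(s))=h(s)\sum_{i=1}^{m}p_{i}(s)f(\varphi_{i}(s))=h(s)(Qf)(s),
\]
directly from the definition of $p_{i}(s)$. Here $h(s)\in(0,\infty)$, so there is no $\infty\cdot0$ issue. Because $Q$ is a Markov operator, $Qf$ is again bounded, with $\|Qf\|_{\infty}\le\|f\|_{\infty}$. This is what lets me iterate.

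The second step is the iterated identity $P^{n}(hf)=h\,Q^{n}f$, proved by induction on $n$. The case $n=0$ is trivial. Assuming $P^{n}(hf)=h\,Q^{n}f$, apply $P$ and use the first step with the bounded function $Q^{n}f$ in place of $f$:
\[
P^{n+1}(hf)=P\bigl(h\,Q^{n}f\bigr)=h\,Q^{n+1}f.
\]
One check is needed: $P$ applied at points of $X^{+}_{\mathrm{fin}}$ only evaluates the argument at points of $X_{\mathrm{fin}}$, again by forward invariance. On the zero set of $h$ the convention makes both sides vanish.

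The third step is the probabilistic formula $Q^{n}f(s)=\mathbb{E}_{\mu_{s}}[f(s_{n})]$, again by induction. The case $n=0$ holds because $s_{0}=s$. For the inductive step I will use the tower property together with \eqref{eq:5-3}:
\[
\mathbb{E}_{\mu_{s}}[g(s_{n+1})]=\mathbb{E}_{\mu_{s}}\bigl[(Qg)(s_{n})\bigr].
\]
Applying this with $g=f$ gives $\mathbb{E}_{\mu_{s}}[f(s_{n+1})]=\mathbb{E}_{\mu_{s}}[(Qf)(s_{n})]$, and the induction hypothesis applied to the bounded function $Qf$ gives $Q^{n}(Qf)(s)=Q^{n+1}f(s)$. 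One could instead argue directly from cylinders: by \eqref{eq:5-1},
\[
\mathbb{E}_{\mu_{s}}[f(s_{n})]=\sum_{|w|=n}\frac{h(\check\varphi_{w}(s))}{h(s)}\,f(\check\varphi_{w}(s)),
\]
which matches $h(s)^{-1}P^{n}(hf)(s)$ after reindexing words by reversal, since $P^{n}$ sums over all compositions of length $n$. This route also yields the second formula from the first.

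I expect the main obstacle to be the boundary issue where $h$ vanishes, i.e.\ points $\varphi_{w}(s)\in X_{\mathrm{fin}}\setminus X^{+}_{\mathrm{fin}}$. These carry $\mu_{s}$-measure zero, since their cylinders have $\mu_{s}$-weight $h(\check\varphi_{w}(s))/h(s)=0$. So the chain $(s_{n})$ stays in $X^{+}_{\mathrm{fin}}$ almost surely, and both identities hold with the zero-extension convention. I will state this convention explicitly at the start and then keep track of it throughout the inductions.
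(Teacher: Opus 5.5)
Your proposal is correct and follows the paper's proof: the one-line computation $P(hf)=h\,Qf$ from the definition of $p_i$, iteration to $P^n(hf)=h\,Q^nf$, and the Markov chain identity from \eqref{eq:5-3}, which you spell out by induction using the tower property (your alternative cylinder computation with word reversal is also valid). Your explicit zero-extension convention on $\{h=0\}$ clarifies a point the paper leaves implicit. One detail behind your claim that ``both sides vanish'' there: $P^n(hf)$ vanishes on $\{h=0\}$ because $h\ge 0$ and $h(x)=\sum_i h(\varphi_i(x))$ force every $h(\varphi_w(x))$ to vanish, not by the convention alone.
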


\begin{proof}
By definition (using \eqref{eq:5-4}), 
\[
P\left(hf\right)\left(s\right)=\sum^{m}_{i=1}h\left(\varphi_{i}\left(s\right)\right)f\left(\varphi_{i}\left(s\right)\right)=h\left(s\right)\left(Qf\right)\left(s\right).
\]
Iterating gives $P^{n}\left(hf\right)=h\left(Q^{n}f\right)$. The
formula $Q^{n}f\left(s\right)=\mathbb{E}_{\mu_{s}}\left[f\left(s_{n}\right)\right]$
is the standard Markov chain identity from \prettyref{prop:5-1}. 
\end{proof}

We now turn to kernels. The next object is the natural normalization
of kernels by $h$, and the corresponding transform of $L$. Given
a kernel $J$ on $X^{+}_{\mathrm{fin}}\times X^{+}_{\mathrm{fin}}$
we define its $h$-normalization by 
\[
J^{\left(h\right)}\left(s,t\right):=\frac{J\left(s,t\right)}{h\left(s\right)h\left(t\right)}.
\]
We then define an operator $\widetilde{L}$ on kernels by 
\begin{equation}
(\widetilde{L}G)\left(s,t\right):=\sum^{m}_{i=1}p_{i}\left(s\right)p_{i}\left(t\right)G\left(\varphi_{i}\left(s\right),\varphi_{i}\left(t\right)\right).\label{eq:5-5}
\end{equation}
This operator is the kernel-level analogue of $Q$, except that the
same symbol $i$ is used in both variables, reflecting the fact that
$L$ branches synchronously in $s$ and $t$.
\begin{prop}
\label{prop:5-3} For every kernel $J$ on $X^{+}_{\mathrm{fin}}\times X^{+}_{\mathrm{fin}}$,
\[
\left(LJ\right)^{\left(h\right)}=\widetilde{L}(J^{\left(h\right)}).
\]
In particular, for the kernel tower $K_{n}=L^{n}K$, 
\[
K^{\left(h\right)}_{n}=\widetilde{L}^{n}(K^{\left(h\right)}),\qquad n\geq0,
\]
and if $K_{\infty}$ is $L$-invariant then $K^{\left(h\right)}_{\infty}$
is $\widetilde{L}$-invariant. 
\end{prop}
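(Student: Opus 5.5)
The identity $\left(LJ\right)^{\left(h\right)}=\widetilde{L}(J^{\left(h\right)})$ is a direct pointwise computation; the one thing to check first is that every point involved lies in $X^{+}_{\mathrm{fin}}$.

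\textbf{Well-definedness.} Fix $s,t\in X^{+}_{\mathrm{fin}}$ and $i\in\{1,\dots,m\}$. By \prettyref{thm:3-2}(2), $\varphi_{i}(s)\in X_{\mathrm{fin}}$. There are two cases.
\begin{itemize}
\item If $h(\varphi_{i}(s))>0$, then $\varphi_{i}(s)\in X^{+}_{\mathrm{fin}}$, and $J^{(h)}(\varphi_{i}(s),\varphi_{i}(t))$ is well-defined. This holds once both $\varphi_{i}(s)$ and $\varphi_{i}(t)$ are in $X^{+}_{\mathrm{fin}}$.
\item If $h(\varphi_{i}(s))=0$, then $p_{i}(s)=0$, and the $i$-th term of $\widetilde{L}$ is read as $0$. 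The matching term of $LJ$ should also vanish.
\end{itemize}
The cleanest way to make the second case rigorous is to restrict to the regime where $X^{+}_{\mathrm{fin}}$ is forward invariant. Alternatively, for the kernels actually used, namely $K_{n}$, $D_{n}$ and $K_{\infty}$, one can show the $i$-th term vanishes anyway. Since $K_{n}\le K_{\infty}$, one has $|J(x,y)|^{2}\le J(x,x)J(y,y)\le h(x)h(y)$ by Cauchy--Schwarz for p.d.\ kernels, together with $J(x,x)\le h(x)$. So if $h(\varphi_{i}(s))=0$, then $J(\varphi_{i}(s),\cdot)=0$. I expect this boundary bookkeeping to be the only real obstacle. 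For a general kernel $J$, I would state the convention that the terms with $p_{i}(s)p_{i}(t)=0$ are omitted, or assume $h>0$ along the orbit.

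\textbf{Main identity.} For $s,t\in X^{+}_{\mathrm{fin}}$,
\[
\frac{(LJ)(s,t)}{h(s)h(t)}=\sum_{i=1}^{m}\frac{h(\varphi_{i}(s))h(\varphi_{i}(t))}{h(s)h(t)}\cdot\frac{J(\varphi_{i}(s),\varphi_{i}(t))}{h(\varphi_{i}(s))h(\varphi_{i}(t))}=\sum_{i=1}^{m}p_{i}(s)p_{i}(t)J^{(h)}(\varphi_{i}(s),\varphi_{i}(t)).
\]
The right-hand side is $(\widetilde{L}J^{(h)})(s,t)$. This is the kernel analogue of \prettyref{lem:5-2}, with the gauge applied in each variable separately.

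\textbf{Tower statement.} Induct on $n$. The case $n=0$ is trivial. For the step, using $K_{n+1}=LK_{n}$ and the identity above,
\[
K_{n+1}^{(h)}=(LK_{n})^{(h)}=\widetilde{L}K_{n}^{(h)}=\widetilde{L}^{n+1}K^{(h)}.
\]
For each $K_{n}$, the vanishing of terms where $h=0$ follows from $u_{n}\le h$, as noted above.

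\textbf{Invariance.} If $LK_{\infty}=K_{\infty}$ (\prettyref{thm:2-1}), then $K_{\infty}^{(h)}=(LK_{\infty})^{(h)}=\widetilde{L}K_{\infty}^{(h)}$. Here the bound $K_{\infty}(x,x)=h(x)$ handles the zero-weight branches.
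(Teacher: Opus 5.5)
Your proof is correct and follows the paper's argument: the same one-line gauge computation $(LJ)^{(h)}(s,t)=\sum_{i}p_{i}(s)p_{i}(t)J^{(h)}(\varphi_{i}(s),\varphi_{i}(t))$, then induction for the tower and the same intertwining for invariance. Your extra bookkeeping for branches with $h(\varphi_{i}(s))=0$ addresses a point the paper passes over silently, since $X^{+}_{\mathrm{fin}}$ need not be forward invariant. Your fix via $J(x,x)\le h(x)$ and Cauchy--Schwarz for $J=K_{n},D_{n},K_{\infty}$, with zero-weight terms read as $0$, is sound.
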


\begin{proof}
Compute directly: 
\begin{align*}
\left(LJ\right)^{\left(h\right)}\left(s,t\right) & =\frac{\sum^{m}_{i=1}J\left(\varphi_{i}\left(s\right),\varphi_{i}\left(t\right)\right)}{h\left(s\right)h\left(t\right)}\\
 & =\sum^{m}_{i=1}p_{i}\left(s\right)p_{i}\left(t\right)\frac{J\left(\varphi_{i}\left(s\right),\varphi_{i}\left(t\right)\right)}{h\left(\varphi_{i}\left(s\right)\right)h\left(\varphi_{i}\left(t\right)\right)}.
\end{align*}
The last expression is $(\widetilde{L}(J^{\left(h\right)}))\left(s,t\right)$;
see \eqref{eq:5-5}. Iteration gives $K^{\left(h\right)}_{n}=\widetilde{L}^{n}\left(K^{\left(h\right)}\right)$,
and invariance passes through the same intertwining. 
\end{proof}

The operator $\widetilde{L}$ admits an explicit boundary expansion
in terms of the Doob measures $\mu_{s}$. For a word $w=i_{1}\cdots i_{n}$
we set 
\[
p_{w}\left(s\right):=\mu_{s}\left(\left[w\right]\right)=\frac{h\left(\check{\varphi}_{w}\left(s\right)\right)}{h\left(s\right)}.
\]
Then $p_{w}\left(s\right)\geq0$ and $\sum_{|w|=n}p_{w}\left(s\right)=1$
for each fixed $n$.
\begin{lem}
\label{lem:5-4} For every kernel $G$ on $X^{+}_{\mathrm{fin}}\times X^{+}_{\mathrm{fin}}$
and every $n\geq0$, 
\[
(\widetilde{L}^{n}G)\left(s,t\right)=\sum_{|w|=n}p_{w}\left(s\right)p_{w}\left(t\right)G\left(\check{\varphi}_{w}\left(s\right),\check{\varphi}_{w}\left(t\right)\right).
\]
Equivalently, if $\mathcal{F}_{n}$ denotes the length-$n$ cylinder
$\sigma$-algebra on $\Omega$, then the finite measure on $\Omega$
defined on cylinders by 
\[
\nu^{\left(n\right)}_{s,t}\left(\left[w\right]\right):=p_{w}\left(s\right)p_{w}\left(t\right)
\]
satisfies 
\[
(\widetilde{L}^{n}G)\left(s,t\right)=\int_{\Omega}G\left(\check{\varphi}_{\omega|n}\left(s\right),\check{\varphi}_{\omega|n}\left(t\right)\right)d\nu^{\left(n\right)}_{s,t}\left(\omega\right),
\]
where the integrand is $\mathcal{F}_{n}$-measurable. 
\end{lem}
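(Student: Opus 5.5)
We prove the cylinder expansion by induction on $n$, peeling off the \emph{last} letter of the word. This order matches the reversed composition: for a word $w$ and a letter $i$,
\[
\check{\varphi}_{wi}=\varphi_{i}\circ\check{\varphi}_{w}.
\]
The case $n=0$ is trivial: $\mathcal{W}_{0}=\{\emptyset\}$, $p_{\emptyset}\equiv1$ and $\check{\varphi}_{\emptyset}=\mathrm{id}$. The case $n=1$ is the definition \eqref{eq:5-5}, since $p_{i}(s)=\mu_{s}([i])$ by \eqref{eq:5-1}.

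For the induction step, write $\widetilde{L}^{n+1}G=\widetilde{L}^{n}(\widetilde{L}G)$ rather than $\widetilde{L}(\widetilde{L}^{n}G)$. Apply the induction hypothesis to the kernel $G':=\widetilde{L}G$, and put $x=\check{\varphi}_{w}(s)$, $y=\check{\varphi}_{w}(t)$. This gives
\[
(\widetilde{L}^{n+1}G)(s,t)=\sum_{|w|=n}p_{w}(s)p_{w}(t)\sum_{i=1}^{m}p_{i}(x)\,p_{i}(y)\,G\bigl(\varphi_{i}(x),\varphi_{i}(y)\bigr).
\]
Two facts then finish the step:
\begin{itemize}
\item $\varphi_{i}(x)=\check{\varphi}_{wi}(s)$, and likewise $\varphi_{i}(y)=\check{\varphi}_{wi}(t)$;
\item $p_{wi}(s)=p_{w}(s)\,p_{i}(\check{\varphi}_{w}(s))$, which is the product formula \eqref{eq:5-1}, or equivalently the telescoping $h(\check{\varphi}_{wi}(s))/h(s)=\frac{h(\check{\varphi}_{w}(s))}{h(s)}\cdot\frac{h(\varphi_{i}(\check{\varphi}_{w}(s)))}{h(\check{\varphi}_{w}(s))}$.
\end{itemize}
Regrouping the double sum over pairs $(w,i)$ as a single sum over words of length $n+1$ gives the claim.

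The one point that needs care is the domain. A branch point $\check{\varphi}_{w}(s)$ may lie in $X_{\mathrm{fin}}\setminus X^{+}_{\mathrm{fin}}$, that is, have $h=0$; forward invariance from \prettyref{thm:3-2}(2) only guarantees it stays in $X_{\mathrm{fin}}$. On such a branch $p_{w}(s)=0$, so we adopt the convention, implicit in \eqref{eq:5-5}, that terms carrying a zero weight are read as $0$. With this convention the induction step goes through verbatim: whenever $p_{w}(s)p_{w}(t)\neq0$, both $\check{\varphi}_{w}(s)$ and $\check{\varphi}_{w}(t)$ lie in $X^{+}_{\mathrm{fin}}$, so $p_{i}$ is defined there. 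Checking this bookkeeping is the only mildly delicate step.

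For the integral reformulation, note that $\nu^{(n)}_{s,t}$ is a nonnegative finite measure on the finite $\sigma$-algebra $\mathcal{F}_{n}$. Its atoms are the cylinders $[w]$ with $|w|=n$, and by Cauchy--Schwarz its total mass is
\[
\sum_{|w|=n}p_{w}(s)p_{w}(t)\le\Bigl(\sum_{|w|=n}p_{w}(s)\Bigr)^{1/2}\Bigl(\sum_{|w|=n}p_{w}(t)\Bigr)^{1/2}=1.
\]
The integrand $\omega\mapsto G\bigl(\check{\varphi}_{\omega|n}(s),\check{\varphi}_{\omega|n}(t)\bigr)$ depends only on $\omega|n$. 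It is therefore an $\mathcal{F}_{n}$-simple function, constant on each cylinder $[w]$. Its integral against $\nu^{(n)}_{s,t}$ is the finite sum over cylinders, which is exactly the expansion just proved.
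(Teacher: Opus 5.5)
Your proof is correct. It is the same induction as the paper's, except for how $\widetilde{L}^{n+1}$ is split.

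The paper writes $\widetilde{L}^{n+1}G=\widetilde{L}(\widetilde{L}^{n}G)$ and inserts the hypothesis at $(\varphi_i(s),\varphi_i(t))$, so it peels off the \emph{first} letter. This has one small advantage: the hypothesis is only used for the fixed kernel $G$. You apply it to $\widetilde{L}G$, which is legitimate because the lemma is quantified over all kernels.

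Your split $\widetilde{L}^{n}(\widetilde{L}G)$ peels off the \emph{last} letter, and this is the ordering that matches the reversed composition. Your identities $\check{\varphi}_{wi}=\varphi_i\circ\check{\varphi}_w$ and $p_{wi}(s)=p_w(s)\,p_i(\check{\varphi}_w(s))$ are exactly the product formula \eqref{eq:5-1}.

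In the paper's ordering, the terms that actually appear are $\check{\varphi}_w(\varphi_i(s))=\check{\varphi}_{iw}(s)$, with weight $p_i(s)\,p_w(\varphi_i(s))=p_{iw}(s)$. So the paper's printed $\check{\varphi}_{wi}$ and $p_{wi}(s)=p_w(\varphi_i(s))p_i(s)$ should read $iw$. The slip is harmless, since $(i,w)\mapsto iw$ is also a bijection onto words of length $n+1$, but your version avoids it altogether.

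You also handle two points the paper leaves implicit, and both are handled correctly:
\begin{itemize}
\item branch points with $h=0$, treated by reading zero-weight terms as $0$;
\item the finiteness of $\nu^{(n)}_{s,t}$.
\end{itemize}
For the mass bound, the literal Cauchy--Schwarz step gives $\ell^2$ norms, and one also needs $p_w^2\le p_w$. The trivial estimate $p_w(t)\le 1$ gives $\sum_{|w|=n}p_w(s)p_w(t)\le 1$ more directly.
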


\begin{proof}
The identity is proved by induction on $n$. The case $n=1$ is the
definition of $\widetilde{L}$, since $p_{i}\left(s\right)=p_{\left(i\right)}\left(s\right)$.
Suppose the identity holds at level $n$ and apply $\widetilde{L}$
once more: 
\[
(\widetilde{L}^{n+1}G)\left(s,t\right)=\sum^{m}_{i=1}p_{i}\left(s\right)p_{i}\left(t\right)(\widetilde{L}^{n}G)\left(\varphi_{i}\left(s\right),\varphi_{i}\left(t\right)\right).
\]
Insert the induction hypothesis at $\varphi_{i}\left(s\right),\varphi_{i}\left(t\right)$
to obtain 
\[
(\widetilde{L}^{n+1}G)\left(s,t\right)=\sum^{m}_{i=1}\sum_{|w|=n}p_{i}\left(s\right)p_{w}\left(\varphi_{i}\left(s\right)\right)p_{i}\left(t\right)p_{w}\left(\varphi_{i}\left(t\right)\right)G\left(\check{\varphi}_{wi}\left(s\right),\check{\varphi}_{wi}\left(t\right)\right).
\]
Using the multiplicative rule $p_{wi}\left(s\right)=p_{w}\left(\varphi_{i}\left(s\right)\right)p_{i}\left(s\right)$,
we identify the coefficient as $p_{wi}\left(s\right)p_{wi}\left(t\right)$
and recover the claimed sum over words of length $n+1$. The integral
form is just a rewriting of the same cylinder expansion. 
\end{proof}

We next connect this boundary expansion to the defect decomposition
from \prettyref{sec:2}. Recall the defect kernels 
\[
D_{n}:=K_{n+1}-K_{n},\qquad n\geq0,
\]
which are positive definite by monotonicity of the kernel tower. Writing
$D_{0}=LK-K$, one has $D_{n}=L^{n}D_{0}$ and hence, by \prettyref{prop:5-3},
\[
D^{\left(h\right)}_{n}=\widetilde{L}^{n}(D^{\left(h\right)}_{0}).
\]
Therefore \prettyref{lem:5-4} yields an explicit boundary expansion
for each normalized defect kernel: 
\[
D^{\left(h\right)}_{n}\left(s,t\right)=\sum_{|w|=n}p_{w}\left(s\right)p_{w}\left(t\right)D^{\left(h\right)}_{0}\left(\check{\varphi}_{w}\left(s\right),\check{\varphi}_{w}\left(t\right)\right).
\]
Summing in $n$ and using the telescoping identity $K_{\infty}=K+\sum_{n\geq0}D_{n}$
from \prettyref{sec:2}, we obtain a boundary organized representation
for the normalized invariant completion: 
\[
K^{\left(h\right)}_{\infty}\left(s,t\right)=K^{\left(h\right)}\left(s,t\right)+\sum^{\infty}_{n=0}\sum_{|w|=n}p_{w}\left(s\right)p_{w}\left(t\right)D^{\left(h\right)}_{0}\left(\check{\varphi}_{w}\left(s\right),\check{\varphi}_{w}\left(t\right)\right),\qquad s,t\in X^{+}_{\mathrm{fin}}.
\]
This formula is purely deterministic and already exhibits the boundary
filtration by word length. We now include a Hilbert space realization
of the same expansion as an $L^{2}$ boundary Gram kernel; this is
the boundary analogue of the direct-sum defect realization from \prettyref{sec:2}.
\begin{thm}
\label{thm:5-5} Let $H_{D^{\left(h\right)}_{0}}$ be the RKHS of
the normalized defect kernel $D^{\left(h\right)}_{0}$ on $X^{+}_{\mathrm{fin}}$.
Define a Hilbert space 
\[
H_{\partial}:=\ell^{2}\left(\mathbb{N}_{0}\right)\otimes L^{2}\left(\Omega,\nu\right)\otimes H_{D^{\left(h\right)}_{0}},
\]
where $\nu$ is any probability measure on $\Omega$ for which cylinders
have positive measure (for example the Bernoulli measure). For each
$s\in X^{+}_{\mathrm{fin}}$ define $\Psi\left(s\right)\in H_{\partial}$
by the rule that its $(n,\omega)$-fiber equals 
\[
\Psi\left(s\right)\left(n,\omega\right)=\frac{p_{\omega|n}\left(s\right)}{\sqrt{\nu\left(\left[\omega|n\right]\right)}}(D^{\left(h\right)}_{0})_{\check{\varphi}_{\omega|n}\left(s\right)}\in H_{D^{\left(h\right)}_{0}},\qquad n\geq0,
\]
where $(D^{\left(h\right)}_{0})_{x}$ denotes the kernel section at
$x$ in $H_{D^{\left(h\right)}_{0}}$. Then for all $s,t\in X^{+}_{\mathrm{fin}}$,
\[
\left\langle \Psi\left(s\right),\Psi\left(t\right)\right\rangle _{H_{\partial}}=\sum^{\infty}_{n=0}D^{\left(h\right)}_{n}\left(s,t\right).
\]
Consequently, 
\[
K^{\left(h\right)}_{\infty}\left(s,t\right)=K^{\left(h\right)}\left(s,t\right)+\left\langle \Psi\left(s\right),\Psi\left(t\right)\right\rangle _{H_{\partial}}.
\]
\end{thm}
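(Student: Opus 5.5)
The plan is to compute the inner product fiberwise, collapse each level-$n$ integral over $\Omega$ to a finite sum over cylinders, and then recognize the boundary expansion of $D^{(h)}_n$ obtained just before the theorem from \prettyref{lem:5-4} and \prettyref{prop:5-3}.

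\textbf{Step 1: the level-$n$ integrand.} Write $\Psi(s)=\bigoplus_{n\ge0}\Psi_n(s)$, where $\Psi_n(s)\in L^2(\Omega,\nu)\otimes H_{D^{(h)}_0}\cong L^2(\Omega,\nu;H_{D^{(h)}_0})$ is the function $\omega\mapsto\Psi(s)(n,\omega)$. For fixed $n$ this map depends only on $\omega|n$, so it is a simple $\mathcal{F}_n$-measurable function with finitely many values. Hence it is Bochner measurable and square integrable, and $\Psi_n(s)$ is a well-defined element of the fiber.

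\textbf{Step 2: collapse to a sum over words.} Split $\Omega$ into the cylinders $[w]$ with $|w|=n$. On $[w]$ the integrand is constant, so
\[
\langle\Psi_n(s),\Psi_n(t)\rangle=\sum_{|w|=n}\nu([w])\,\frac{p_w(s)p_w(t)}{\nu([w])}\,\bigl\langle(D^{(h)}_0)_{\check\varphi_w(s)},(D^{(h)}_0)_{\check\varphi_w(t)}\bigr\rangle .
\]
The factors $\nu([w])$ cancel; this is where positivity of cylinder measures is used. The reproducing property turns the inner product into $D^{(h)}_0(\check\varphi_w(s),\check\varphi_w(t))$. By the displayed expansion preceding the theorem (\prettyref{lem:5-4} combined with $D^{(h)}_n=\widetilde L^n(D^{(h)}_0)$ from \prettyref{prop:5-3}), this equals $D^{(h)}_n(s,t)$.

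\textbf{Step 3: membership of $\Psi(s)$ in $H_\partial$.} This is the one genuine point to check. Taking $t=s$ in Step 2 gives $\|\Psi_n(s)\|^2=D^{(h)}_n(s,s)=d_n(s)/h(s)^2$. By \eqref{eq:3-7}, $\sum_n d_n(s)=h_\infty(s)-u_0(s)<\infty$ for $s\in X_{\mathrm{fin}}$. Hence $\sum_n\|\Psi_n(s)\|^2<\infty$ and $\Psi(s)\in\ell^2(\mathbb N_0)\otimes L^2(\Omega,\nu)\otimes H_{D^{(h)}_0}$.

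\textbf{Step 4: sum over $n$.} The inner product in $H_\partial$ is the sum of the fiber inner products over $n$. This sum converges absolutely by Cauchy--Schwarz and Step 3, so $\langle\Psi(s),\Psi(t)\rangle=\sum_n D^{(h)}_n(s,t)$.

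\textbf{Step 5: the consequence.} Divide the identity $K_\infty=K+\sum_n D_n$ from \prettyref{thm:2-1}(2) by $h(s)h(t)$. By \prettyref{thm:3-8} it holds pointwise on $X_{\mathrm{fin}}$, since the Gram-matrix convergence gives entrywise convergence. This yields $K^{(h)}_\infty=K^{(h)}+\langle\Psi(s),\Psi(t)\rangle$.

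The main obstacle is the convergence bookkeeping in Steps 3 and 4; the algebra in Step 2 is a routine cylinder computation.
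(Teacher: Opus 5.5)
Your proposal is correct and follows essentially the same route as the paper's proof. Both compute the inner product level by level, collapse each level-$n$ integral to a sum over the cylinders $[w]$ with $|w|=n$ (where the factors $\nu([w])$ cancel), identify the result with $D^{(h)}_n(s,t)$ via Lemma~\ref{lem:5-4}, and finish with the normalized telescoping identity. Your Step 3 adds the check that $\Psi(s)\in H_\partial$, using $\sum_n\|\Psi_n(s)\|^2=(h(s)-u_0(s))/h(s)^2<\infty$ from \eqref{eq:3-7}; the paper takes this membership for granted.
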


\begin{proof}
Fix $s,t\in X^{+}_{\mathrm{fin}}$. By construction and orthogonality
in $\ell^{2}\left(\mathbb{N}_{0}\right)$, 
\[
\left\langle \Psi\left(s\right),\Psi\left(t\right)\right\rangle _{H_{\partial}}=\sum^{\infty}_{n=0}\int_{\Omega}\left\langle \Psi\left(s\right)\left(n,\omega\right),\Psi\left(t\right)\left(n,\omega\right)\right\rangle _{H_{D^{\left(h\right)}_{0}}}\,d\nu\left(\omega\right).
\]
For fixed $n$, the integrand is constant on cylinders $\left[w\right]$
with $|w|=n$, and on such a cylinder we have $\omega|n=w$. Using
$\left\langle (D^{\left(h\right)}_{0})_{x},(D^{\left(h\right)}_{0})_{y}\right\rangle =D^{\left(h\right)}_{0}\left(x,y\right)$,
we obtain 
\begin{align*}
\int_{\Omega}\left\langle \Psi\left(s\right)\left(n,\omega\right),\Psi\left(t\right)\left(n,\omega\right)\right\rangle d\nu\left(\omega\right) & =\sum_{|w|=n}\frac{p_{w}\left(s\right)p_{w}\left(t\right)}{\nu\left(\left[w\right]\right)}D^{\left(h\right)}_{0}\left(\check{\varphi}_{w}\left(s\right),\check{\varphi}_{w}\left(t\right)\right)\nu\left(\left[w\right]\right)\\
 & =\sum_{|w|=n}p_{w}\left(s\right)p_{w}\left(t\right)D^{\left(h\right)}_{0}\left(\check{\varphi}_{w}\left(s\right),\check{\varphi}_{w}\left(t\right)\right).
\end{align*}
By \prettyref{lem:5-4} applied to $G=D^{\left(h\right)}_{0}$, the
right-hand side equals $\left(\widetilde{L}^{n}D^{\left(h\right)}_{0}\right)\left(s,t\right)=D^{\left(h\right)}_{n}\left(s,t\right)$.
Summing over $n$ gives 
\[
\left\langle \Psi\left(s\right),\Psi\left(t\right)\right\rangle _{H_{\partial}}=\sum^{\infty}_{n=0}D^{\left(h\right)}_{n}\left(s,t\right).
\]
Finally, $K^{\left(h\right)}_{\infty}=K^{\left(h\right)}+\sum_{n\geq0}D^{\left(h\right)}_{n}$
is the normalized form of the telescoping identity from \prettyref{sec:2},
and this yields the claimed representation. 
\end{proof}

\begin{rem}
The map $s\mapsto\Psi\left(s\right)$ produces a boundary indexed
family of features built from the one-step defect $D_{0}$ propagated
along the tree, with weights dictated by the Doob path measures. The
$\ell^{2}\left(\mathbb{N}_{0}\right)$ factor records the boundary
filtration by word length, and the orthogonality across levels matches
the defect splitting from \prettyref{sec:2}. In particular, the boundary
representation above is canonical up to the choice of a reference
cylinder-positive measure $\nu$ on $\Omega$, and it is compatible
with the Gaussian model of \prettyref{sec:4} in the sense that it
gives an explicit boundary feature space whose Gram kernel is the
accumulated defect $\sum_{n\geq0}D^{\left(h\right)}_{n}$.

We do not consider boundary limits along $\omega\mapsto s_{n}\left(\omega\right)$
here, but the path measures $\mu_{s}$ and the boundary feature map
$\Psi$ provide a convenient platform for such refinements, including
martingale and boundary compactification questions, as well as for
constructing additional $L$-invariant kernels by altering the defect
contribution level-by-level along $\Omega$. 
\end{rem}

\bibliographystyle{amsalpha}
\bibliography{ref}

\end{document}